\theoremstyle{plain}
\newtheorem{theorem}{Theorem}
\newtheorem{lemma}{Lemma}
\theoremstyle{remark}
\newtheorem{remark}{Remark}
\newtheorem{example}{Example}
\newcommand{\CC}{{\mathbb C}}
\newcommand{\RR}{{\mathbb R}}
\newcommand{\NN}{{\mathbb N}}
\newcommand{\DD}{{\mathbb D}}
\newcommand{\EE}{{\mathbb E}}
\newcommand{\TT}{{\mathbb T}}
\newcommand{\Dveca}{{\mathfrak D}_{\vec{\alpha}}}
\newcommand{\Dvecaa}{{\mathfrak D}_{(\alpha_1,\alpha_2)}}
\newcommand{\veca}{{\vec{\alpha}}}
\newcommand{\la}{\lambda}
\newcommand{\al}{\alpha}
\renewcommand{\Re}{\operatorname{Re}}
\newcommand{\McC}{\raise.5ex\hbox{c}}
\begin{document}

\date{\today}

\title[Cyclic polynomials in Dirichlet spaces]{Cyclic polynomials in anisotropic Dirichlet~spaces}
\author[Knese]{Greg Knese}
\address{Department of Mathematics, Washington University in St.\ Louis, 
One Brookings Drive, Campus Box 1146, St. Louis, MO 63130, U.S.A.}
\email{geknese@math.wustl.edu}

\author[Kosi\'nski]{\L ukasz Kosi\'nski}
\address{Institute of Mathematics, Jagiellonian University, \L ojasiewicza 6, 30-348 Krak\'ow, Poland.}
\email{lukasz.kosinski@im.uj.edu.pl}

\author[Ransford]{Thomas J. Ransford}
\address{D\'epartement de math\'ematiques et de statistique, Universit\'e Laval, 
Pavillon Alexandre-Vachon 1045, Avenue de la M\'edecine, Qu\'ebec City, Qu\'ebec, G1V 0A6, Canada.}
\email{thomas.ransford@mat.ulaval.ca}

\author[Sola]{Alan A. Sola}
\address{Department of Mathematics and Statistics, University of South Florida, 4202 E Fowler Avenue, CMC342, Tampa, FL 33620, U.S.A.}
\email{sola@usf.edu}

\keywords{Cyclic vectors, polynomials in two variables, tensor product Hilbert spaces.}
\subjclass[2010]{Primary: 47A13. Secondary: 32A37, 32A60}
\thanks{GK supported by NSF grant DMS-1363239.}
\thanks{\L K supported by NCN grant UMO-2014/15/D/ST1/01972. }
\thanks{TR supported by grants from NSERC and the Canada research chairs program.}

\begin{abstract}
  Consider the   Dirichlet-type space on the bidisk consisting~of~holomorphic functions $f(z_1,z_2):=\sum_{k,l\ge0}a_{kl}z_1^kz_2^l$  \phantom{a}such that
   $\sum_{k,l\ge0}(k+1)^{\alpha_1}(l+1)^{\alpha_2}|a_{kl}|^2<\infty.$ Here the parameters $\alpha_1,\alpha_2$ are arbitrary real numbers. We characterize the polynomials that are cyclic for the shift operators on this space. More precisely, we show that, given an irreducible polynomial $p(z_1,z_2)$ depending on both $z_1$ and $z_2$ and having no zeros in the bidisk:
  \begin{itemize}
  \item if $\alpha_1+\alpha_2\le1$, then $p$ is cyclic;
  \item if $\alpha_1+\alpha_2>1$ and $\min\{\alpha_1,\alpha_2\}\le1$, then $p$ is cyclic if and only if it has finitely many zeros in the two-torus $\TT^2$;
  \item if $\min\{\alpha_1,\alpha_2\}>1$, then $p$ is cyclic if and only if it has no zeros in $\TT^2$.
  \end{itemize}  \end{abstract}

\maketitle

\section{Introduction}

Spaces of analytic functions on the unit bidisk
\[\DD^2=\{z=(z_1, z_2)\in \CC^2\colon |z_1|<1, |z_2|<1\}\] 
provide a compelling meeting place for the study of function theory
and operator theory.  Bounded analytic functions on the bidisk and
their relation to inequalities on pairs of commuting operators provide
a well-developed example of this (see the survey \cite{McCarthy}).
Our focus is on certain Hilbert spaces of analytic functions on
$\DD^2$, the anisotropic Dirichlet spaces, and a perennial topic in
operator theory: understanding the cyclic vectors for model operators,
in this case the coordinate shifts.  In full generality, this is
likely a difficult problem, but if we restrict ourselves to analyzing
cyclic \emph{polynomials} we can give a full characterization while
simultaneously learning much about the behavior of two variable stable
polynomials; i.e. those with no zeros on the bidisk.  A crucial role
is played by the size of a polynomial's zero set on the $2$-torus
\[\TT^2=\{\zeta=(\zeta_1, \zeta_2)\in \CC^2\colon |\zeta_1|=1,
|\zeta_2|=1\},\]
the distinguished boundary of the unit bidisk.

\subsection{Dirichlet spaces on the bidisk}

Let $\veca=(\alpha_1, \alpha_2)\in \RR^2$
be fixed. We say that a holomorphic function $f\colon \DD^2\to \mathbb{C}$ having power series expansion \[f(z_1,z_2)=\sum_{k\ge0}\sum_{l\geq 0}a_{k,l}z_1^kz_2^l\] belongs to the {\it anisotropic weighted Dirichlet space} $\Dveca$ if
\begin{equation}
\|f\|_{\veca}^2:=\sum_{k\ge 0}\sum_{l\ge 0}(k+1)^{\alpha_1}(l+1)^{\alpha_2}|a_{k,l}|^2<\infty.
\label{D: Dvecadef}
\end{equation}
These spaces have been considered by a number of mathematicians, see for instance \cite{Hed88, Kap94, JR06, BKKLSS14}. We refer the reader to these papers, and the references therein, for further 
background material, and only give a brief summary of some facts we shall need later on. As is pointed out in these references, for $\veca\in \RR^2$ with $\alpha_1<2$ and $\alpha_2<2$, the spaces can be furnished with the equivalent norm
\begin{equation}
\|f\|^2_{\veca,\ast}=|f(0,0)|^2+\mathfrak{D}_{\veca}(f),
\label{integralnorm}
\end{equation}
where
\begin{align*}
\mathfrak{D}_{\veca}(f)=&\int_{\mathbb{D}}|\partial_{z_1}[f(z_1,0)]|^2dA_{\alpha_1}(z_1)+
\int_{\mathbb{D}}|\partial_{z_2}[f(0,z_2)]|^2dA_{\alpha_2}(z_2)\\&+
\int_{\mathbb{D}^2}|\partial_{z_2}\partial_{z_1}f(z_1,z_2)|^2dA_{\alpha_1}(z_1)dA_{\alpha_2}(z_2).
\label{normintegrals}
\end{align*}
Here, for $k=1,2$, we have set $dA_{\alpha_k}(z_k)=(1-|z_k|^2)^{1-\alpha_k}dA(z_k)$, where $dA(z)=\pi^{-1}dxdy$ denotes normalized area measure. More compactly, we have
\[\|f\|^2_{\veca,\ast}=\int_{\DD^2}|\partial_{z_2}\partial_{z_1}(z_1z_2f(z_1,z_2))|^2\,dA_{\alpha_1}(z_1)\,dA_{\alpha_2}(z_2).\]

For all choices of $\veca\in \RR^2$, polynomials in two complex variables form a dense subset of $\Dveca$. Moreover, the classical one-variable weighted Dirichlet spaces $D_{\alpha}$, consisting of analytic functions $f=\sum_{k\geq 0}a_kz^k$ on the unit disk $\DD$ having 
\[\|f\|_{\alpha}=\sum_{k\ge0}(k+1)^{\alpha}|a_k|^2<\infty,\]
embed in $\Dveca$ in a natural way. These one-variable spaces are discussed 
in the textbook \cite{EKMRBook}, and again admit an equivalent integral norm,
\[\|f\|_{\alpha, \ast}^2=|f(0,0)|^2+\int_{\DD}|f'(z)|^2(1-|z|^2)^{1-\alpha}dA(z).\] 

If $\min\{\alpha_1,\alpha_2\}>1$, then $\Dveca$ is a Banach algebra of continuous functions on the closed unit bidisk; this can be seen from the Cauchy-Schwarz inequality and the convergence of the series
$\sum_{k,l\geq 0}(k+1)^{-\alpha_1}(l+1)^{-\alpha_2}$. Setting $\alpha_1=\alpha_2=0$, we are led to the Hardy space $H^2(\DD^2)$ of the bidisk, which was studied by Rudin in the 60s, see 
\cite{RudBook}. 
The parameter choice $\alpha_1=\alpha_2=-1$ yields the Bergman space
of the bidisk. The choice $\alpha_1=\alpha_2=1$ corresponds to the Dirichlet space of
the bidisk, which can be characterized by the fact that
pre-composition with automorphisms of the bidisk form a set of unitary
operators.  This space was considered by Kaptano\u{g}lu \cite{Kap94},
among others.

Isotropic weighted Dirichlet spaces, the cases with 
$\alpha_1=\alpha_2$, were recently studied in depth in \cite{BCLSS13II, BKKLSS14}; we shall use $\mathfrak{D}_{\alpha}$ to denote these isotropic spaces. The anisotropic spaces $\Dveca$ were 
studied by Jupiter and Redett \cite{JR06}, who consider order relations and identify multipliers between different $\Dveca$. For instance, they observe that $\Dveca \subset \mathfrak{D}_{\vec{\beta}}$ if 
$\alpha_1\geq\beta_1$ and $\alpha_2\geq \beta_2$. A related fact that
we shall use frequently is that 
\begin{equation}
f\in \Dveca \quad \textrm{if and only if} \quad \partial_{z_1}f\in
\mathfrak{D}_{\alpha_1-2, \alpha_2} \text{ and } f(0,\cdot) \in D_{\al_2}
\label{derivativerelation}
\end{equation}
and similarly for $\partial_{z_2}f$. A {\it multiplier} of $\Dveca$ is a function $\phi \colon \DD^2\to \CC$ that is holomorphic and satisfies $\phi f\in \Dveca$ for every $f\in \Dveca$.
In the case of the Hardy and Bergman spaces, the multipliers are
precisely the bounded analytic functions, but for general $\Dveca$ it
is not as easy to describe the multiplier space $M(\Dveca)$.  For our
purposes it will suffice to note that any function that is analytic on
a neighborhood of the closed bidisk is a multiplier on each $\Dveca$.
\subsection{Shift operators and cyclic vectors}
Consider the two linear operators $S_1,S_2\colon \Dveca \to \Dveca$ defined via 
\begin{equation}
S_1\colon f \mapsto z_1\cdot f \quad \textrm{and}
\quad S_2\colon f\mapsto z_2\cdot f.
\end{equation}
When viewed as acting on the coefficient matrix of a function $f$, the operators $S_1$ and $S_2$ become right and upwards translations, justifying the designation {\it shift operators}. It is clear that the coordinate shifts commute, and, in view of \eqref{D: Dvecadef}, they are bounded on $\Dveca$. 

Coordinate shifts $\{S_1, S_2\}$ acting on $\Dveca$ furnish a natural model of multivariate operator theory. The structure of invariant subspaces of these operators is still rather poorly understood, even in the case of the unweighted Hardy space $H^2=\mathfrak{D}_{\vec{0}}$, where the shifts are commuting isometries. See Rudin's book \cite{RudBook} for some basic results and pathologies such as the existence of an invariant subspace containing no 
bounded elements, as well as \cite{Man88, Chatetal14} and the references therein for some positive results, such as a conditional version of Beurling's theorem on invariant subspaces.

It is easy to exhibit invariant subspaces of $\Dveca$. For instance, fixing a function $f\in \Dveca$, we can form the {\it cyclic subspace}
\begin{equation}
[f]_{\veca}=\mathrm{clos}\,\,\mathrm{span}\{z_1^kz_2^lf\colon k,l\geq 0\}
\label{E: cyclicsubspace}
\end{equation}
which is invariant under $\{S_1,S_2\}$ by definition; the closure 
is taken with respect to the $\Dveca$ norm. Another class of 
invariant subspaces is given by {\it zero-based subspaces}: a simple example 
is the subspace of functions divisible by the polynomial $f=z_2$.

It is a much more difficult task to obtain a concrete description of general invariant subspaces of $\Dveca$ and their elements, even in the simplest case of cyclic subspaces. In this paper, we are primarily interested in identifying {\it cyclic vectors} for the coordinate shifts: functions $f\in \Dveca$ such that $[f]_{\vec{\alpha}}=\Dveca$. This seems like a hard problem for general functions, and we restrict our attention to the case where $f$ itself is a polynomial in two variables. In what follows, we shall use the letter $p$ to indicate that we are dealing with a fixed polynomial. In that setting, we are able to give a complete characterization, extending the corresponding result in \cite{BKKLSS14} to the anisotropic setting.

The cyclicity of a function $f\in \Dveca$ is intimately connected with its vanishing properties. The constant function $p(z_1,z_2)=1$ is cyclic in all $\Dveca$; this is just a reformulation of the fact that polynomials are dense. Similarly, functions that are holomorphic on a neighborhood of the bidisk, and are non-vanishing on $\overline{\DD^2}$ are cyclic 
for all $\Dveca$. At the other extreme, the polynomial $p(z_1,z_2)=z_2$ is clearly not cyclic, as elements of $[z_2]_{\veca}$ have to vanish on the set $\{z_2=0\}\cap \DD^2$. More generally, as a 
consequence of boundedness of point evaluation functionals on $\Dveca$ (see \cite{JR06}), no function that vanishes on the interior of the bidisk can be cyclic. The case of zeros on the boundary is subtler, and polynomials that vanish on the boundary of the bidisk remain cyclic provided their zero sets in the torus are not too large, relative to the parameter $\veca\in \RR^2$.

\subsection{Statement of results}
In \cite{BKKLSS14}, a complete classification of cyclic polynomials in
the isotropic spaces $\mathfrak{D}_{\alpha}$ was found in terms of
conditions on $\mathcal{Z}(p)\cap \TT^2$, where
$\mathcal{Z}(p)=\{z\in \CC^2\colon p(z)=0\}$ is the zero set of a
polynomial $p=p(z_1,z_2)$.  Earlier, Neuwirth, Ginsberg, and Newman
\cite{NGN70} had shown that all polynomials that do not vanish in
$\DD^n$ are cyclic in $H^2(\DD^n)$, and hence in all $\Dveca$ that 
contain $H^2(\mathbb{D}^2)$; see also Gelca's paper \cite{G95}.

The purpose of this paper is to extend the classification result of \cite{BKKLSS14} to the anisotropic setting. 
Part of the proof in that paper was based on $\alpha$-capacities and Cauchy integrals, and does not 
generalize in an obvious way to the general setting where one of the components in $\veca$ may be negative. At the same time, we show how the arguments in \cite{BKKLSS14} which relied on prior work of Knese and others (see \cite{K10a, K10b} and the references in those papers) on polynomials having 
determinantal representations can be replaced by integral estimates. While the former theory is elegant, our approach is more direct, and applies in the case of negative parameters as well; cf \cite[Theorem 3.1]{BKKLSS14}, where it is assumed that $\alpha>0$.

We now state the main result of this paper.
\begin{theorem}\label{T:maintheorem}
Let $p$ be an irreducible polynomial, depending on both $z_1$ and $z_2$, 
with no zeros in the bidisk.
\begin{enumerate}
\item If $\alpha_1+\alpha_2 \leq 1$ then $p$ is cyclic in $\mathfrak{D}_{\vec{\alpha}}$.
\item If $\alpha_1+\alpha_2>1$ and $\min\{\alpha_1, \alpha_2\}\leq 1$, then $p$ is cyclic in $\mathfrak{D}_{\vec{\alpha}}$ if and only if $\mathcal{Z}(p)\cap \mathbb{T}^2$ is empty or finite.
\item If $\min\{\alpha_1,\alpha_2\}>1,$ then $p$ is cyclic in $\mathfrak{D}_{\vec{\alpha}}$ if and only if 
$\mathcal{Z}(p)\cap \mathbb{T}^2$ is empty.
\end{enumerate}
\end{theorem}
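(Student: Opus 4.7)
The strategy is to handle the three regimes separately, in order of increasing technical difficulty. Case~(3) follows from the Banach algebra structure noted in the introduction: when $\min\{\alpha_1,\alpha_2\}>1$, the space $\Dveca$ embeds continuously into $C(\overline{\DD^2})$, so point evaluation at any $\zeta\in\TT^2$ is a bounded linear functional on $\Dveca$, and a torus zero of $p$ therefore obstructs $1\in[p]_{\veca}$. Conversely, if $p$ has no zeros on $\overline{\DD^2}$, then $1/p$ is holomorphic on a neighborhood of $\overline{\DD^2}$, hence belongs to $\Dveca$, and so $1=p\cdot(1/p)\in[p]_{\veca}$.

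For the sufficiency direction in Cases~(1) and~(2), the plan is to construct an explicit approximating sequence of polynomials $\{q_n\}$ with $q_n p\to 1$ in $\Dveca$. Natural candidates are a Brown--Shields-type truncation of $\log(1/p)/\log n$, or a sequence built from the dilates $p_r(z)=p(rz)$ as $r\to 1^{-}$. To control $\|1-q_n p\|_{\veca}$ one passes to the equivalent integral norm~\eqref{integralnorm}; the key ingredient is a weighted $L^2$ bound on $\partial_{z_1}\partial_{z_2}(z_1 z_2\,\varphi/p)$ against $dA_{\alpha_1}(z_1)\,dA_{\alpha_2}(z_2)$, showing that the singular behavior of $1/p$ near $\mathcal{Z}(p)\cap\TT^2$ is absorbed by the norm---unconditionally when $\alpha_1+\alpha_2\le 1$, and away from finitely many point singularities otherwise. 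This is the direct integral-estimate approach the introduction flags as a replacement for the determinantal-representation arguments of~\cite{BKKLSS14}, and should be robust enough to cover negative parameters. A partition-of-unity argument then combines a global zero-free estimate off the (finitely many) torus zeros with a local estimate at each.

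For necessity in Case~(2), assume $p$ is cyclic while $\mathcal{Z}(p)\cap\TT^2$ is infinite. Irreducibility and dependence on both variables force this zero set to contain a nontrivial real-analytic arc $\gamma\subset\TT^2$. Via the slicing relation~\eqref{derivativerelation}, cyclicity in $\Dveca$ transmits to approximation of $1$ by $q_n p$ in the one-variable weighted Dirichlet spaces $D_{\alpha_1}$ and $D_{\alpha_2}$ along horizontal and vertical slices, respectively. A classical one-dimensional Dirichlet-space capacity argument then forces almost every such slice of $\gamma$ to have vanishing $\alpha_j$-capacity; a Fubini/energy argument along $\gamma$, combined with $\alpha_1+\alpha_2>1$, yields the desired contradiction with $\gamma$ having positive one-dimensional Hausdorff measure.

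The main obstacle I anticipate is the necessity argument in Case~(2): the isotropic treatment of~\cite{BKKLSS14} relies on a single $\alpha$-capacity and on Cauchy-integral machinery, whereas here the two coordinate variables carry unequal weights and one must combine the slicewise estimates asymmetrically along $\gamma$ into a genuinely two-parameter energy whose finiteness is governed by the sum $\alpha_1+\alpha_2$. A secondary difficulty is running the weighted integral estimate for $\|1-q_np\|_{\veca}$ uniformly when one of $\alpha_1,\alpha_2$ is negative, which will likely require an integration by parts that trades a derivative against a negative power in the weight.
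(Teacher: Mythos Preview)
Your treatment of Case~(3) matches the paper's. For the sufficiency direction your instinct toward dilations is correct, but the paper's implementation differs in an essential way: rather than estimating $1/p$ directly, it factors $p(z_1,z_2)=A_0(z_2)\prod_{j=1}^m(1-z_1h_j(z_2))$ where the $h_j$ are branches of an algebraic function mapping a slit disk into $\DD$, and then bounds the one-variable dilation $p(z_1,z_2)/p(rz_1,z_2)=\prod_j q_r(z_1,h_j(z_2))$ with $q_r(z_1,z_2)=(1-z_1z_2)/(1-rz_1z_2)$. After the change of variables $z_2\mapsto h_j(z_2)$ this reduces the $\Dveca$-estimate to the model polynomial $1-z_1z_2$, whose dilation integral is bounded precisely when $\alpha_1+\alpha_2\le 1$. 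For the finite-zero ``if'' direction in Case~(2) the paper does not use a partition of unity (which would destroy holomorphicity) but \L ojasiewicz's inequality: it gives $|p(z)|^2\ge C\prod_k|z_1-\zeta_1^k|^{2q}$ on $\TT^2$, so that $\prod_k(z_1-\zeta_1^k)^q/p$ is as smooth as one likes on $\TT^2$ and hence lies in $\Dveca$; then the one-variable polynomial $\prod_k(z_1-\zeta_1^k)^q\in[p]_{\veca}$ is cyclic once $\alpha_1\le 1$.

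Your necessity argument in Case~(2) is where there is a genuine gap. You propose a slicewise capacity argument, but this is exactly the approach the paper flags as failing to generalize: when one $\alpha_j$ is negative there is no useful one-variable $\alpha_j$-capacity, boundary slices need not even make sense, and there is no evident two-parameter energy along $\gamma$ whose finiteness is governed by $\alpha_1+\alpha_2$. The paper instead runs a symmetrization argument that again reduces to $1-z_1z_2$. Writing the roots of $p(z_1,\cdot)$ via the other factorization as $b_1(z_2),\dots,b_n(z_2)$, one has $\prod_j p(z_1,b_j(z_2))=(1-z_1z_2)^nG(z_1,z_2)$ with $G$ holomorphic past $\overline{\DD^2}$. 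If $p$ were cyclic with $q_\nu p\to 1$ in $\Dveca$, the paper shows (via Puiseux-type derivative bounds on the $b_j$ and a change-of-variables lemma for bounded-multiplicity maps) that the symmetrized products $\tfrac{1}{n!}\sum_{\sigma}\prod_j (q_{\nu_j}p)(z_1,b_{\sigma(j)}(z_2))$ still converge to $1$ in $\Dveca$; since each such product is divisible by $(1-z_1z_2)^n$, this forces $1-z_1z_2$ to be cyclic, contradicting $\alpha_1+\alpha_2>1$. This reduction-to-the-model step is the idea your outline is missing.
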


The parameter regions in Theorem \ref{T:maintheorem} are illustrated in Figure \ref{alphaplanefig}. 
\begin{figure}
\includegraphics[width=0.4 \textwidth]{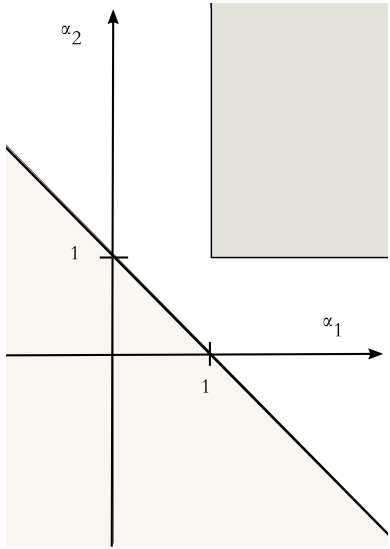}
\caption{Shaded region, lower left: parameter values $\veca$ for which {\it all} polynomials that do not vanish in $\DD^2$ are cyclic; Shaded region, top right: parameter values $\veca$ for which {\it no} polynomials that vanish in $\overline{\DD^2}$ are cyclic.}
\label{alphaplanefig}
\end{figure}
As is explained in \cite{BKKLSS14}, the set $\mathcal{Z}(p)\cap \TT^2$
associated with an irreducible polynomial $p$ is either a curve or a
finite number of points. The requirement that $f$ be irreducible is
not a serious restriction.  Indeed, since all polynomials are multipliers, a
product of polynomials is cyclic precisely when all its factors are.
In the formulation of the theorem the assumption that $f$ depends on
both variables is made to avoid complications created by one variable
polynomials.  For instance, the function $f(z) = 1-z_j$ is cyclic in $\Dvecaa$ precisely when
$\alpha_j \leq 1$.

\begin{remark}\label{R:tensorrem}
 A new feature that appears in the anisotropic 
framework is that the degrees of smoothness/roughness of the space in
the two coordinate directions interact to determine whether a
polynomial is cyclic or not.
For instance, according to Theorem \ref{T:maintheorem}, the two-variable polynomial $p=1-z_1z_2$ is cyclic in $\mathfrak{D}_{(-2,2)}$, as is the one-variable polynomial $p=1-z_1$. By contrast, the function $p=1-z_2$ is {\it not} cyclic in $\mathfrak{D}_{(-2,2)}$ because $1-z$ is not cyclic in $D_{2}$. 
\end{remark}

\section{Preliminaries}\label{S:examples}
In this section, we first record some facts concerning one-variable functions in $D_{\alpha}$. Both the results and the arguments used to establish them will be used in our subsequent two-variable proofs. We then discuss two representative examples that illustrate the contents of Theorem \ref{T:maintheorem}.
\subsection{Functions of one variable}
Let us consider the polynomial $p=(1-z_1)(1-z_2)$ and investigate its properties as an element of $\Dveca$. Its zero set in $\partial \DD^2$ is rather large: it can be represented as 
\[\mathcal{Z}(p)\cap \partial \DD^2=(\{1\}\times \overline{\DD})\cup (\overline{\DD})\times \{1\}).\]

Nevertheless, $p$ is cyclic in all $\Dveca$ having $\max\{\alpha_1,\alpha_2\}\leq 1$. This follows from the 
cyclicity of $P=1-z$ in the classical Dirichlet space on the disk originally established by Brown and Shields \cite{BS84}. 

\begin{theorem}[Brown and Shields, 1984]\label{BrownShields}
If $P$ is a polynomial with no zeros in $\DD$, then $P$ is cyclic in $D_{\alpha}$ for $\alpha\leq 1$.
\end{theorem}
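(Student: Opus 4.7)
The plan is to reduce to $P(z)=1-z$, then to the critical exponent $\al=1$, and finally to exhibit explicit polynomial approximants in the classical Dirichlet space $D_1$.

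First I would reduce to $P(z)=1-z$. Factor $P(z)=c\prod_{j}(z-\la_j)$ with $|\la_j|\ge 1$. A factor with $|\la_j|>1$ has $1/(z-\la_j)$ holomorphic on a neighborhood of $\overline{\DD}$, hence is an invertible (and therefore cyclic) multiplier of $D_\al$. For $\la_j=e^{i\theta_j}$ the unitary rotation $f(z)\mapsto f(e^{i\theta_j}z)$ on $D_\al$ reduces cyclicity of $\la_j-z$ to that of $1-z$. Since polynomials are multipliers, a product of polynomials is cyclic if and only if each factor is: the inclusion $[p_1p_2]\subset[p_1]$ is immediate, while if $p_1$ is cyclic then polynomials $q_n$ with $q_np_1\to 1$ in $D_\al$ give $q_np_1p_2\to p_2$, so $p_2\in[p_1p_2]$ and hence $[p_2]\subset[p_1p_2]$. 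The theorem is thus reduced to cyclicity of $1-z$ in $D_\al$ for $\al\le 1$.

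Second, since $(k+1)^\al\le k+1$ whenever $\al\le 1$, we have $\|f\|_\al\le\|f\|_1$ for every polynomial $f$. Any polynomial sequence $G_N$ with $\|(1-z)G_N-1\|_1\to 0$ therefore witnesses cyclicity in every $D_\al$ with $\al\le 1$, so it suffices to treat the case $\al=1$.

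Third, for $N\ge 2$ I would set
\[
G_N(z)=\sum_{k=0}^{N}\Bigl(1-\frac{\log(k+1)}{\log(N+1)}\Bigr)z^k.
\]
The coefficients of $(1-z)G_N-1$ vanish at $k=0$ and $k>N$, and for $1\le k\le N$ equal $b_k=-\log(1+1/k)/\log(N+1)$. Using $\log(1+1/k)\le 1/k$,
\[
\|(1-z)G_N-1\|_1^2=\sum_{k=1}^{N}(k+1)|b_k|^2\le\frac{1}{\log^2(N+1)}\sum_{k=1}^{N}\frac{k+1}{k^2}=O\!\Bigl(\frac{1}{\log N}\Bigr)\to 0.
\]
The main obstacle is the choice of weights in this last step. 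The naive linear (Ces\`aro) profile $a_k=1-k/N$ gives $\|(1-z)G_N-1\|_1^2\asymp 1$, which suffices for every $\al<1$ but misses the critical exponent. The logarithmic profile above is dictated by minimising $\sum(k+1)(a_k-a_{k-1})^2$ under the constraints $a_0=1$ and $a_k\to 0$, and is exactly what is needed to reach $\al=1$.
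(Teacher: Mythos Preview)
Your proof is correct, but it takes a different route from the paper's. Both arguments begin with the same reduction to $P(z)=1-z$ via factorization, rotation invariance, and the ``product of cyclic multipliers is cyclic'' principle. After that point the methods diverge. You construct explicit polynomial approximants $G_N$ with the logarithmic profile $a_k=1-\log(k+1)/\log(N+1)$ and show by direct coefficient computation that $\|(1-z)G_N-1\|_1^2=O(1/\log N)$, obtaining \emph{norm} convergence in $D_1$ and hence in every $D_\alpha$, $\alpha\le1$. The paper instead considers the radial dilations $F_r=(1-z)/(1-rz)$, uses the Forelli--Rudin integral estimate to show that $\|F_r\|_{D_1}$ stays bounded, and deduces only \emph{weak} convergence $F_r\rightharpoonup 1$ in $D_1$; cyclicity then follows because $[P]$ is weakly closed.

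Your approach is more elementary (no integral asymptotics, no weak-compactness argument) and yields a quantitatively stronger conclusion. The paper's dilation method, however, is the one that drives the rest of the article: the two-variable analogue $p(z_1,z_2)/p(rz_1,z_2)$ is precisely what is used in Lemma~\ref{twovardilation} to handle irreducible polynomials on the bidisk, whereas an explicit-coefficient construction like yours does not generalize in any obvious way to that setting.
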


We give an alternate proof of this theorem, using dilations of $1/P$ and the following integral estimate of Forelli and Rudin (see \cite[Theorem 1.7]{HKZBook}): for $a\in (-1, \infty)$ and $b\in (-\infty,\infty)$,
\begin{equation}
\int_{\DD}\frac{(1-|z|)^{a}}{|1-\overline{w}z|^{2+a+b}}dA(z)\asymp\left\{ \begin{array}{cc}1, &b<0\\
-\log(1-|w|^2), & b=0\\
(1-|w|^2)^{-b}, &b>0
\end{array}
\right.
\label{integralasymptotics}
\end{equation}
as $|w|\to 1^{-}$.  This estimate will be useful later on in the paper
as well.

The {\it radial dilation} of a function $f\colon\DD\to\CC$ is defined for $r\in (0,1)$ by $f_r(z)=f(rz)$. 

\begin{lemma}\label{onevardilation}
Let $P$ be a polynomial with no zeros in $\DD$.
Then $P/P_r\to1$  weakly   in $D_1$ as $r\to1^-$.
If $\alpha<1$, then $P/P_r\to1$ in norm in $D_\alpha$ as $r\to1^-$.
\end{lemma}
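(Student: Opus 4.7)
The plan is to factor $P(z)=c\prod_{j=1}^n(1-z/\zeta_j)$, with $|\zeta_j|\ge1$ since $P$ has no zeros in $\DD$, and then to analyze the elementary ratios $B_j(z):=(1-z/\zeta_j)/(1-rz/\zeta_j)$, so that $P/P_r=\prod_j B_j$. A direct computation gives
\[
B_j'(z)=-\frac{1-r}{\zeta_j(1-rz/\zeta_j)^2},\qquad|B_j'(z)|=\frac{1-r}{|1-rz/\zeta_j|^2}.
\]
The first real task is the uniform bound $\sup_{r}\sup_{\overline\DD}|B_j|\le C$. When $|\zeta_j|>1$ this is immediate; when $|\zeta_j|=1$ the maximum modulus principle reduces it to a one-variable estimate on $\TT$, where one checks directly that $|B_j|\le 2/(1+r)$.

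Armed with this, the product rule together with the inequality $(\sum_j a_j)^2\le n\sum_j a_j^2$, the equivalent integral norm \eqref{integralnorm}, and the fact that $(P/P_r)(0)=1$, yield
\[
\|P/P_r-1\|_{\alpha,*}^2\ \le\ C'(1-r)^2\sum_{j=1}^n\int_\DD\frac{(1-|z|^2)^{1-\alpha}}{|1-rz/\zeta_j|^4}\,dA(z).
\]
Factors with $|\zeta_j|>1$ contribute $O((1-r)^2)$ because the denominator is bounded below. For $|\zeta_j|=1$ the integral is precisely of the Forelli--Rudin form \eqref{integralasymptotics} with $|w|=r$, $a=1-\alpha$, $b=1+\alpha$; it is $O((1-r)^{-(1+\alpha)})$ when $\alpha>-1$, with logarithmic and $O(1)$ behaviour at $\alpha=-1$ and $\alpha<-1$ respectively. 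Combining these, $\|P/P_r-1\|_{\alpha,*}^2=O((1-r)^{1-\alpha})$, up to logarithms, which tends to $0$ when $\alpha<1$ and remains bounded when $\alpha=1$.

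The case $\alpha<1$ yields the claimed norm convergence directly. For $\alpha=1$ the estimate only delivers uniform boundedness of $\{P/P_r\}_{r<1}$ in $D_1$, so I would upgrade to weak convergence as follows. Testing against a polynomial $q$, the inner product $\langle P/P_r,q\rangle_{D_1}$ is a finite linear combination of Taylor coefficients of $P/P_r$. Because the zeros $\zeta_j/r$ of $P_r$ all satisfy $|\zeta_j/r|>1$, $P/P_r\to 1$ uniformly on every compact subset of $\DD$, and Cauchy's formula then forces every Taylor coefficient of $P/P_r$ to converge to the corresponding coefficient of the constant function $1$. Boundedness in norm combined with convergence on the dense subspace of polynomials then gives the weak convergence in $D_1$. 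The main obstacle, as I see it, is the uniform bound on $|B_j|$ at unimodular $\zeta_j$; once that is in place, the proof is a clean application of the Forelli--Rudin asymptotics.
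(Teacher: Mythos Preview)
Your proof is correct and follows essentially the same route as the paper: factor $P$ into linear terms, use the uniform sup-norm bound on each ratio $B_j$ (the paper states $|Q/Q_r|\le 2$, you get the equivalent $2/(1+r)$), and feed the derivative formula into the Forelli--Rudin estimate \eqref{integralasymptotics}. The only cosmetic differences are that you keep all factors together via the product rule instead of formally reducing to $\deg P=1$, you extract the explicit rate $\|P/P_r-1\|_{\alpha,*}^2=O((1-r)^{1-\alpha})$ for $\alpha<1$ (the paper just says the case ``may be treated in a similar way'' or invokes compactness of $D_1\hookrightarrow D_\alpha$), and your weak-convergence step at $\alpha=1$ spells out the density-of-polynomials argument where the paper cites relative weak compactness plus pointwise convergence. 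One tiny slip: your displayed formula for $|B_j'|$ drops a factor $1/|\zeta_j|$, but since $|\zeta_j|\ge 1$ the stated expression is still an upper bound, so nothing is affected.
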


\begin{proof}
We consider first the case $\alpha=1$. It suffices to establish that $P/P_r$ is bounded in the Dirichlet norm. Indeed, if this holds, then the family $P/P_r$ is relatively weakly compact, and since $P/P_r\to 1$ pointwise, weak convergence follows. (See \cite{BS84} for a comprehensive discussion of convergence concepts.) Every $P$ can be factored into linear factors, and it is not hard to see that every such factor $Q$ satisfies the estimate $|Q(z)/Q_r(z)| \leq 2$ for $z\in \DD$. Thus, in showing that $P/P_r$ is bounded in the Dirichlet norm, it suffices to treat the case where $\deg P=1$. Also, we may as well as assume that the zero of $P$ lies on $\TT$, the other case (when the zero lies outside $\overline{\DD}$) being obvious. Finally, by rotation invariance, we can suppose that $P(z)=1-z$.
Computing the Dirichlet integral of $P/P_r$, and invoking the estimate \eqref{integralasymptotics} with $a=0$ and $b=2$, we find that
\[\|P/P_r\|_{D_1}^2\asymp1+ \int_{\DD}\frac{(1-r)^2}{|1-rz|^4}dA(z)\asymp1+\frac{(1-r)^2}{(1-r^2)^2}\asymp 1,\]
and the proof for $\alpha=1$ is complete.

The case $\alpha<1$ may be treated in a similar way. Alternatively one can remark that, since the inclusion  $D_1\hookrightarrow D_\alpha$ is a compact linear map, weak convergence in $D_1$ carries over to norm convergence in $D_\alpha$.
\end{proof}

\begin{proof}[Proof of Theorem~\ref{BrownShields}]
Recall that $[P]$ is the smallest closed invariant subspace of the Dirichlet space $D_\alpha$ that contains a given $P$. By definition, it contains all functions of the form $q\cdot P$, where $q$ is a polynomial. It also contains all functions of the form 
$g\cdot P$, where $g$ is holomorphic on a neighborhood of $\overline{\DD}$, because the Taylor polynomials of $g$ converge to $g$ in the multiplier norm of $D_\alpha$. We therefore have $P/P_r\in [P]$ for each $r\in (0,1)$. We have shown that $P/P_r\to 1$ weakly in $D_\alpha$ for all $\alpha\le1$. As $[P]$ is weakly closed in $D_\alpha$, it follows that $1\in [P]$, and hence $[P]=D_\alpha$, as desired.
\end{proof}

We return to the two-variable setting and the product function $P=(1-z_1)(1-z_2)$. Since the $\Dveca$-norm restricted to functions depending on the first variable only coincides with the norm in $D_{\alpha_1}$, and the $\Dveca$-norm reduces to the norm in $D_{\alpha_2}$ for function of $z_2$ only, Theorem~\ref{BrownShields} implies that at least one of the factors of $p$ is cyclic precisely when $\min \{\alpha_1, \alpha_2\}\leq 1$. They are both cyclic if $\max\{\alpha_1, \alpha_2\}\leq 1$, and since the product of cyclic multipliers is itself cyclic, the result follows.
\subsection{A function vanishing along a curve}
To illustrate what happens when the zero set is infinite, let us consider the polynomial $p(z_1, z_2)=1-z_1z_2$, which does not 
vanish in the bidisk or on $\partial \DD^2\setminus \TT^2$, and has \[\mathcal{Z}(p)\cap \TT^2=\{(e^{it}, e^{-it})\},\]
a curve in the distinguished boundary.

The same kind of reasoning as in \cite[Section 3]{BCLSS13II} reveals that $p$ is cyclic in $\Dveca$ precisely when $\alpha_1+\alpha_2\leq 1$. We first recall that an equivalent criterion  for a function $f\in \Dveca$ to be cyclic is the existence of a sequence $(q_n)$ of polynomials in two variables such that 
\[\|p\cdot q_n-1\|_{\veca}\to 0 \quad \textrm{as}\quad n\to \infty,\]
for then the cyclic element $1\in [f]_{\veca}$.
By orthogonality, the expression $\|p\cdot q_n-1\|_{\veca}$ is minimized 
by taking $(q_n)$ to be polynomials in $z_1z_2$. Next, it 
is elementary to see that 
\[\|f\|_{\veca}=\|F\|_{D_{(\alpha_1+\alpha_2)}}\] 
for functions $f\in \Dveca$ of the form $f(z_1,z_2)=F(z_1\cdot z_2)$, where $F\colon \DD\to \CC$ is a function on the unit disk. But the polynomial $P=1-z$ is cyclic in $D_{\alpha}$ if and only if $\alpha \leq 1$. Hence $p=1-z_1z_2$ is cyclic precisely when $\alpha_1+\alpha_2\leq1$. 
\subsection{A function vanishing at a single point}
We now consider a polynomial that does depend on both variables, but whose 
zero set in the distinguished boundary is minimal.
\begin{example} 
The polynomial $p(z_1,z_2)=2-z_1-z_2$ is cyclic for $\Dveca$ whenever $\min\{\alpha_1,\alpha_2\}\le1$.
\end{example}
We note that $\mathcal{Z}(p) \cap \TT^2=\{(1,1)\}$,  so that $p$ can certainly not be cyclic in $\Dveca$ when $\min\{\alpha_1,\alpha_2\}>1$.

This example will follow from Section \ref{sec:finite}, where we
rehash an argument from \cite{BKKLSS14}.  In Appendix A, we offer an
elementary proof of this fact which may be of independent
interest. 

\section{Polynomials with finite zero sets} \label{sec:finite}
We now turn to the proof of Theorem \ref{T:maintheorem}.  The case of
a polynomial that does not vanish on the closed bidisk is trivial, so we
exclude it.  Our first step is to address finitely many zeros on $\TT^2$.

\begin{theorem} Let $p\in \CC[z_1,z_2]$ have no zeros in the open
  bidisk, and finitely many zeros on $\TT^2$. Then $p$ is cyclic in
  $\Dveca$ for $\min\{\alpha_1, \alpha_2\}\leq 1$. 
\end{theorem}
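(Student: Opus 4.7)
By the symmetry of the hypothesis, assume $\alpha_1\leq 1$. Since products of cyclic multipliers remain cyclic, we may assume $p$ is irreducible; and if $p$ depended on only one variable, having only finitely many zeros on $\TT^2$ would force $p$ zero-free on $\overline{\DD^2}$, making cyclicity trivial. So assume $p$ is irreducible and depends on both variables. A standard local argument on the structure of $\mathcal{Z}(p)$ (any smooth branch of the zero variety reaching a non-torus boundary point could be pushed slightly into $\DD^2$) then ensures that every zero of $p$ in $\overline{\DD^2}$ lies on $\TT^2$.

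The strategy is to adapt Lemma~\ref{onevardilation} by dilating in the first coordinate only. Set $p_r(z_1,z_2):=p(rz_1,z_2)$ for $r\in(0,1)$. Because the zeros of $p$ on $\overline{\DD^2}$ are confined to $\TT^2$, the polynomial $p_r$ is zero-free on $\overline{\DD^2}$, so $1/p_r$ is holomorphic on a neighborhood of $\overline{\DD^2}$ and is therefore a multiplier on $\Dveca$; consequently $p/p_r=p\cdot(1/p_r)\in[p]_{\veca}$. Since $[p]_{\veca}$ is weakly closed and $p/p_r\to 1$ pointwise on $\DD^2$, it suffices to prove the uniform bound $\sup_{r\in(0,1)}\|p/p_r\|_{\veca}<\infty$: weak sequential compactness then forces $p/p_r\to 1$ weakly in $\Dveca$, giving $1\in[p]_{\veca}$ and hence cyclicity of $p$.

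We estimate the norm through \eqref{integralnorm}. The boundary term in $z_2$ vanishes, since $(p/p_r)(0,z_2)\equiv 1$; the boundary term in $z_1$ reduces to a one-variable Dirichlet norm of $p(\cdot,0)/p(r\cdot,0)$, which is uniformly bounded because $p(\cdot,0)$ has no zeros on $\overline{\DD}$ (any zero would lie in $\overline{\DD}\times\{0\}$, disjoint from $\TT^2$). The work is in the mixed integral
\[
J(r):=\int_{\DD^2}\bigl|\partial_{z_1}\partial_{z_2}(p/p_r)\bigr|^2\,dA_{\alpha_1}(z_1)\,dA_{\alpha_2}(z_2).
\]
The fundamental theorem of calculus gives $p-p_r=(1-r)z_1\int_0^1 p_{z_1}((r+t(1-r))z_1,z_2)\,dt$, so differentiating $p/p_r$ with the quotient rule yields a structural identity of the form $\partial_{z_1}\partial_{z_2}(p/p_r)=(1-r)\,\Phi_r(z_1,z_2)/p_r^3$, with $\Phi_r$ a polynomial in $(z_1,z_2,r)$ uniformly bounded on $\overline{\DD^2}\times[0,1]$. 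The crucial gain is the explicit factor $1-r$.

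The main obstacle is thus to bound $(1-r)^2\int_{\DD^2}|p_r|^{-6}\,dA_{\alpha_1}(z_1)\,dA_{\alpha_2}(z_2)$ uniformly in $r$. Away from the finitely many toral zeros $\zeta^{(j)}$ of $p$, $|p_r|$ stays bounded below and the contribution is $O((1-r)^2)$. Near each $\zeta^{(j)}$, a local change of variables puts $p$ into a normal form in which the pole of $p_r$ sits at distance $\asymp 1-r$ outside $\overline{\DD}$ in the $z_1$-direction, and the resulting integral factors as two iterated one-variable estimates of Forelli--Rudin type \eqref{integralasymptotics}. The hypothesis $\alpha_1\leq 1$ enters exactly here: the weight $(1-|z_1|^2)^{1-\alpha_1}$ makes the inner $z_1$-integral absorb the pole of $p_r$ and produce enough positive powers of $1-r$ to cancel the blow-up coming from the outer $z_2$-integration. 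Because only the first coordinate is dilated, there is no constraint on $\alpha_2$, which matches the hypothesis $\min\{\alpha_1,\alpha_2\}\leq 1$.
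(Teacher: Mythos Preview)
Your approach via one-variable radial dilations is genuinely different from the paper's proof, which instead uses \L ojasiewicz's inequality to produce a smooth function $Q$ with $pQ=\prod_k(z_1-\zeta_1^k)^q$, a product of one-variable cyclic multipliers lying in $[p]_{\veca}$. Unfortunately your dilation argument does not go through: the claimed uniform bound $\sup_{r}\|p/p_r\|_{\veca}<\infty$ is \emph{false} in general under the sole hypothesis $\alpha_1\le1$.

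Take the simplest test case $p(z_1,z_2)=2-z_1-z_2$. A direct computation gives
\[
\partial_{z_1}\partial_{z_2}\Bigl(\frac{p}{p_r}\Bigr)
=-(1-r)\,\frac{2+rz_1-z_2}{(2-rz_1-z_2)^{3}},
\]
so the mixed integral is comparable to
\[
(1-r)^2\int_{\DD^2}\frac{(1-|z_1|^2)^{1-\alpha_1}(1-|z_2|^2)^{1-\alpha_2}}{|2-rz_1-z_2|^{6}}\,dA(z_1)\,dA(z_2).
\]
Integrating first in $z_2$ with the Forelli--Rudin asymptotics \eqref{integralasymptotics} (here $a=1-\alpha_2$, $b=3+\alpha_2$) yields a two-sided bound $\asymp(|2-rz_1|^2-1)^{-(3+\alpha_2)}$; near $z_1=1$ one has $|2-rz_1|^2-1\asymp(1-r)+(1-\rho)+\theta^2$ in polar coordinates $z_1=\rho e^{i\theta}$. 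Carrying out the remaining $\theta$- and $\rho$-integrations gives
\[
J(r)\asymp(1-r)^{\,3/2-\alpha_1-\alpha_2}\qquad(r\to1^-),
\]
so $J(r)\to\infty$ whenever $\alpha_1+\alpha_2>3/2$. In particular, for $(\alpha_1,\alpha_2)=(0,1.9)$ one has $\min\{\alpha_1,\alpha_2\}=0\le1$ but $\|p/p_r\|_{\veca}\to\infty$. The assertion that ``because only the first coordinate is dilated, there is no constraint on~$\alpha_2$'' is therefore incorrect: the variables are coupled through $|2-rz_1-z_2|$ and the integral does not factor.

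Two secondary problems compound this. First, the integral representation \eqref{integralnorm} you invoke is only valid when both $\alpha_1<2$ \emph{and} $\alpha_2<2$; since the theorem allows $\alpha_2$ to be arbitrarily large, the formula cannot be applied directly, and no reduction via \eqref{derivativerelation} is indicated. Second, the decisive ``local normal form'' step is asserted rather than proved; as the computation above shows, any honest execution of it would reveal the unwanted dependence on $\alpha_2$. The paper's \L ojasiewicz argument sidesteps all of this by never estimating $\|p/p_r\|_{\veca}$ at all: it places a one-variable polynomial in $z_1$ inside $[p]_{\veca}$, so that only the condition $\alpha_1\le1$ is needed and $\alpha_2$ plays no role.
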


The proof is a very slight modification of a corresponding argument in
 Section~3 of \cite{BKKLSS14}. Without loss of generality, assume
$\alpha_1\leq \alpha_2$, so that in particular $\alpha_1\leq 1$. The
basic idea is to compare our polynomial to a product of factors of the
form $\zeta-z_1$, which are known to be cyclic.

Write 
\[\mathcal{Z}(p)\cap \TT^2=\{\zeta^1, \zeta^2, \ldots, \zeta^N\}\]
for some $N\in \NN$; each $\zeta^k=(\zeta_1^k, \zeta_2^k)$ with
$|\zeta^k_1|=|\zeta^k_2|=1$.

We now recall {\it \L ojasiewicz's inequality}, a classical result in real algebraic geometry \cite{KPBook}:
if $f$ is a real analytic function on an open set $U\subset \RR^d$, and $E\subset U$ is compact, then there exist a constant $C>0$ and a number $q\in \NN$ such that 
\begin{equation}
|f(x)|\geq C\cdot \mathrm{dist}(x, \mathcal{Z}(f))^q, \quad x\in E.  
\label{E:loj}
\end{equation}
Apply \eqref{E:loj} to the function $|p|^2$, which is real-analytic on
$\CC^2$, and the compact set $E = \TT^2$ to see there exist $C>0,
q\in \mathbb{N}$ such that
\[
\begin{aligned}
|p(z)|^2 \geq C \text{dist}(z, \mathcal{Z}(p))^{2q} &\geq C_1
\prod_{k=1}^n(|z_1-\zeta^k_1|^2+|z_2-\zeta^k_2|^2)^{q}\\
&\geq C_1 \prod_{k=1}^{N}|z_1-\zeta^k_1|^{2q}, \quad z\in \TT^2.
\end{aligned}
\]
Thus, 
\[Q(z_1,z_2)=\frac{\prod_{k=1}^N(z_1-\zeta^k_1)^q}{p(z_1,z_2)}\]
is bounded on $\TT^2$ and if we increase $q$ we can make this function
as smooth as we like on $\TT^2$.  

But then $Q\in \mathfrak{D}_{\veca}$ because of rapid decay of Fourier
coefficients of $Q$, so that in turn
$g:=pQ=\prod_{k=1}^N(z_1-\zeta^k_1)^q\in p\Dveca$. Now $p$ is a
multiplier, hence $p\Dveca=[p]_{\veca}$, and $g$ is cyclic in $\Dveca$
for $\alpha_1\leq 1$, since it is a product of cyclic
multipliers. Hence $[p]_{\veca}$ contains a cyclic element for
$\Dveca$.

\section{Infinite zero sets} 
This section is devoted to proving that a polynomial with no zeros in
$\DD^2$ and infinitely many zeros in the $2$-torus  is cyclic in $\Dveca$ exactly when
$\al_1+\al_2 \leq 1$.  The proof of cyclicity when $\al_1+\al_2\leq 1$
and the proof of non-cyclicity when $\al_1+\al_2>1$ have the
interesting feature that they in some sense reduce the problem to the
model polynomial $1-z_1z_2$.

\subsection{Cyclicity via radial dilations} 
By the previous section we only need to address
polynomials with infinite zero set on $\TT^2$ (necessarily forming a
curve) but the proof below does not use this in an essential way.

\begin{theorem}\label{T:raddiltwovars} Suppose
  $p\in \CC[z_1,z_2]$ is irreducible, has no zeros in the bidisk, and is
  not a polynomial in one variable only.  Then $p$ is cyclic in $\Dveca$ whenever
  $\alpha_1+\alpha_2\leq 1$. 
\end{theorem}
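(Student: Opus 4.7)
I would adapt the radial dilation technique of Lemma~\ref{onevardilation} to the bidisk setting. Set $p_r(z_1,z_2) = p(rz_1, rz_2)$ for $r\in(0,1)$ and $f_r = p/p_r$. Since $p$ has no zeros in $\DD^2$, the polynomial $p_r$ has no zeros on $\overline{\DD^2}$, so $1/p_r$ extends holomorphically to a neighbourhood of the closed bidisk and is therefore a multiplier of $\Dveca$. Consequently $f_r = p\cdot(1/p_r) \in p\Dveca \subseteq [p]_{\veca}$, and pointwise $f_r\to 1$ on $\DD^2$. If I can show the uniform bound $\sup_{0<r<1}\|f_r\|_{\veca}<\infty$, then weak compactness of bounded sets in the Hilbert space $\Dveca$ forces $f_r\to 1$ weakly, and since $[p]_{\veca}$ is weakly closed, $1\in [p]_{\veca}$, giving cyclicity.

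To produce the uniform norm bound I would work with an equivalent integral norm, obtained when necessary by iterating the derivative relation \eqref{derivativerelation} in each variable to bring both parameters below $2$. This reduces the control of $\|f_r\|_{\veca}$ to the control of boundary slice contributions, which depend only on the univariate polynomials $p(\cdot,0)$ and $p(0,\cdot)$, plus a bulk mixed-partial integral
\[
I_r \;=\; \int_{\DD^2}\bigl|\partial_{z_1}^{k_1}\partial_{z_2}^{k_2}(p/p_r)\bigr|^2\,(1-|z_1|^2)^{\beta_1}(1-|z_2|^2)^{\beta_2}\,dA(z_1)\,dA(z_2)
\]
for suitable $k_j,\beta_j$. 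A standard fact about stable polynomials in two variables that depend on both variables is that all zeros on $\partial\DD^2$ lie on $\TT^2$; hence $p(\cdot,0)$ and $p(0,\cdot)$ are zero-free on $\overline{\DD}$, so the slice quotients converge uniformly to $1$ and the slice integrals pose no difficulty.

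The main obstacle is bounding $I_r$ uniformly in $r$. The integrand's denominator $p_r^{k_1+k_2+1}$ grows as $r\to 1^-$ near $\mathcal{Z}(p)\cap \TT^2$, and the estimate must absorb this growth into the tensor-product weight $(1-|z_1|^2)^{\beta_1}(1-|z_2|^2)^{\beta_2}$. My plan is to localize near a point $\zeta\in\mathcal{Z}(p)\cap \TT^2$, invoke a \L ojasiewicz-type lower bound for $|p|$ in terms of the distance to the zero curve, and then apply Forelli--Rudin asymptotics \eqref{integralasymptotics} tangentially to the curve and transversally away from it. Because the zero locus on $\TT^2$ is one-dimensional, the transverse integration pairs the two radial weights along a single direction, so the resulting singular integrals are controlled by the diagonal combination $\al_1+\al_2$ rather than by $\al_1$ and $\al_2$ separately. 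This is the precise sense in which the general case reduces to the model polynomial $1-z_1z_2$ from Section~\ref{S:examples}, where cyclicity holds exactly when $\al_1+\al_2\le 1$. In the subcritical range one would expect norm convergence $\|f_r-1\|_{\veca}\to 0$, while at the critical value $\al_1+\al_2=1$ the estimate yields only weak convergence, mirroring the dichotomy in the one-variable case at $\al=1$.
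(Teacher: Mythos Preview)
Your overall architecture---dilate, show $p/p_r\in[p]_{\veca}$, verify pointwise convergence, and then pass to weak convergence via a uniform norm bound---is exactly what the paper does. The substantive gap is in the last step: you have not given a mechanism that actually produces the bound $\sup_r\|p/p_r\|_{\veca}<\infty$, and the plan you sketch (\L ojasiewicz plus Forelli--Rudin ``tangential/transversal to the curve'') does not carry this. \L ojasiewicz only gives $|p(z)|\ge C\,\dist(z,\mathcal Z(p))^q$ with an uncontrolled exponent $q$, and even for $q=1$ the distance to a curve in $\TT^2$ does not split as a product of $(1-|z_1|^2)^{\beta_1}$ and $(1-|z_2|^2)^{\beta_2}$ in any way that the weighted Forelli--Rudin estimate \eqref{integralasymptotics} can absorb. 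In the paper, \L ojasiewicz is used only for \emph{finite} zero sets; it plays no role in the curve case.

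What the paper does instead is dilate in \emph{one} variable only, $p_r(z_1,z_2)=p(rz_1,z_2)$, and then exploit the linear factorization $p=A_0(z_2)\prod_{j=1}^{m}(1-z_1h_j(z_2))$ so that
\[
F_r=\frac{p}{p_r}=\prod_{j=1}^{m} q_r(z_1,h_j(z_2)),\qquad q_r(z_1,z_2)=\frac{1-z_1z_2}{1-rz_1z_2}.
\]
Two ingredients then do all the work: a Hopf-lemma estimate $(1-|h_j(z_2)|^2)\ge C(1-|z_2|^2)$, which yields $|1-rz_1h_j(z_2)|\ge C(1-|z_2|^2)$ alongside the obvious $|1-rz_1h_j(z_2)|\ge C(1-|z_1|^2)$ and $\ge 1-r$; and a change of variables $z_2\mapsto h_j(z_2)$ (valid because each $h_j$ has bounded multiplicity) that converts $\int_D|h_j'|^2|1-rz_1h_j|^{-4}\,dA$ into the model integral $\int_\DD|1-rz_1z_2|^{-4}\,dA$. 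The first ingredient is precisely what lets one distribute $2k-1$ factors of $|1-rz_1h_j|^{-1}$ between the two radial weights, and this is where the condition $\alpha_1+\alpha_2\le1$ (equivalently $2k-1\le(2k-1-\alpha_1)+(1-\alpha_2)$) enters. Your two-variable dilation $p(rz_1,rz_2)$ destroys this clean factorization, since the denominator becomes $\prod_j(1-rz_1h_j(rz_2))$ and the $h_j(rz_2)$ no longer match the numerator. The reduction ``to the model polynomial $1-z_1z_2$'' that you allude to is therefore not a heuristic: it is a literal change of variables, and it requires the algebraic setup above rather than a distance estimate.
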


We will use $1-z_1z_2$ as a ``model polynomial," analogous to $1-z$ in the one-variable setting.
The crux of the proof of Theorem \ref{T:raddiltwovars} is to establish the following analog of Lemma \ref{onevardilation}; see also \cite{NGN70, G95}.
\begin{lemma}\label{twovardilation} Let $\alpha_1+\alpha_2\leq 1$. Then 
\begin{equation}
F_r(z_1,z_2)=\frac{p(z_1 ,z_2)}{p(rz_1, z_2)}\longrightarrow 1
\label{dilationconvergence}
\end{equation} 
weakly in $\Dveca$ as $r\to 1^{-}$.
\end{lemma}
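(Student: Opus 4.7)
The plan is to follow the template set by Lemma~\ref{onevardilation}: first establish pointwise convergence $F_r(z)\to 1$ on $\DD^2$, next establish a uniform bound $\sup_{r\in(0,1)}\|F_r\|_{\veca}<\infty$, and then combine these. Since $\Dveca$ is a reproducing-kernel Hilbert space (point evaluations are bounded, cf.\ \cite{JR06}), any norm-bounded family with pointwise limit $g$ converges to $g$ weakly; this is because every weakly convergent subsequence must have weak limit equal to the pointwise limit. Pointwise convergence is immediate from the uniform convergence $p_r\to p$ on compact subsets of $\DD^2$ together with the hypothesis that $p$ is nonvanishing in $\DD^2$. The entire content of the lemma thus lies in the uniform norm bound.

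By the symmetry of the definition of $\|\cdot\|_{\veca}$ under swapping the two coordinates, I may assume $\alpha_1\leq\alpha_2$; together with $\alpha_1+\alpha_2\leq 1$, this forces $\alpha_1\leq 1/2\leq 1$. I would bound $\|F_r\|_{\veca}$ using the equivalent integral form \eqref{integralnorm} (the case $\alpha_2\geq 2$ is handled separately by passing to an equivalent higher-order integral form, or by arguing on the series side directly from \eqref{D: Dvecadef}). The squared norm splits into four pieces. The point evaluation satisfies $|F_r(0,0)|=1$. The $z_2$-slice integral vanishes identically since $F_r(0,z_2)=p(0,z_2)/p(0,z_2)=1$. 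The $z_1$-slice integral reduces to a one-variable quantity for $P(z_1):=p(z_1,0)$, which is either constant or has no zeros in $\DD$ (any such zero would be a zero of $p$ in $\DD^2$); since $\alpha_1\leq 1$, Lemma~\ref{onevardilation} applied to $P/P_r$ furnishes uniform boundedness.

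The heart of the argument is the uniform estimate for the mixed-derivative double integral
\[
\int_{\DD^2}|\partial_{z_1}\partial_{z_2} F_r(z_1,z_2)|^2\,dA_{\alpha_1}(z_1)\,dA_{\alpha_2}(z_2).
\]
After expanding $\partial_{z_1}\partial_{z_2}F_r$ by the quotient rule into a rational expression in $p$, $p_r$, and their first and mixed partials, I would extract explicit $(1-r)$ factors using the integral identity
\[
p(z_1,z_2)-p(rz_1,z_2)=(1-r)z_1\int_0^1(\partial_{z_1}p)\big((r+s(1-r))z_1,z_2\big)\,ds
\]
together with its $\partial_{z_2}$-derivative. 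These $(1-r)$-factors offset the loss coming from the decay of $p_r$ near $\mathcal{Z}(p)\cap\TT^2$, which is itself controlled from below by a \L ojasiewicz-type lower bound of the form \eqref{E:loj}. The resulting integrand is then dominated by a product to which the Forelli--Rudin asymptotics \eqref{integralasymptotics} can be applied iteratively in the two variables; the critical exponent match happens exactly at $\alpha_1+\alpha_2=1$, echoing the prototype calculation carried out for the model polynomial $p=1-z_1z_2$ in Section~\ref{S:examples}, where the substitution $w=z_1z_2$ reduces the whole norm to the one-variable $D_{\alpha_1+\alpha_2}$-norm. The main obstacle is this mixed-derivative estimate in the generic case when $\mathcal{Z}(p)\cap\TT^2$ is an infinite curve: the cancellation in the numerator must be balanced carefully against the simultaneous vanishing of $p_r$ along that curve as $r\to 1^-$, and the balance only closes under the sharp condition $\alpha_1+\alpha_2\leq 1$.
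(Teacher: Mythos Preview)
Your overall framework---pointwise convergence plus a uniform norm bound, yielding weak convergence in a reproducing-kernel Hilbert space---is exactly the one the paper uses. The gap is in the mechanism you propose for the mixed-derivative norm bound.

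The combination ``\L ojasiewicz lower bound on $|p_r|$ + iterated Forelli--Rudin'' does not close. First, the Forelli--Rudin asymptotics \eqref{integralasymptotics} are for the specific kernel $|1-\bar{w}z|^{-(2+a+b)}$; they give no information about integrals of the form $\int_{\DD^2}|p_r(z)|^{-2k}\,dA_{\alpha_1}dA_{\alpha_2}$ for a general polynomial $p$. A bare lower bound $|p_r(z)|\ge C\,\dist((rz_1,z_2),\mathcal Z(p))^q$ only tells you the integrand is at most $(1-r)^2\dist(\cdot)^{-6q}$ times weights, and there is no Forelli--Rudin-type evaluation of $\int_{\DD^2}\dist((rz_1,z_2),\mathcal Z(p))^{-6q}\,dA_{\alpha_1}dA_{\alpha_2}$ when $\mathcal Z(p)\cap\TT^2$ is a curve. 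Second, the \L ojasiewicz exponent $q$ is not a priori equal to~$1$, so the power-counting you allude to (``the critical exponent match happens exactly at $\alpha_1+\alpha_2=1$'') need not balance. One smaller issue: the problem is not symmetric under swapping coordinates, since the dilation is in $z_1$ only; your ``without loss of generality $\alpha_1\le\alpha_2$'' needs justification at the level of the theorem rather than the lemma.

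What the paper does instead is to \emph{manufacture} the Forelli--Rudin kernel. It factors $p(z_1,z_2)=A_0(z_2)\prod_{j=1}^m(1-z_1h_j(z_2))$ with algebraic branches $h_j\colon D\to\DD$ on a slit disk $D$, so that
\[
F_r(z_1,z_2)=\prod_{j=1}^m q_r(z_1,h_j(z_2)),\qquad q_r(z_1,w)=\frac{1-z_1w}{1-rz_1w}.
\]
Two structural facts then replace your \L ojasiewicz step: a Hopf-lemma estimate (Lemma~\ref{L: hfunctlemma}) giving $(1-|h_j(z_2)|^2)\gtrsim (1-|z_2|^2)$, hence $|1-rz_1h_j(z_2)|\gtrsim\max\{1-r,\,1-|z_1|^2,\,1-|z_2|^2\}$; and the observation that each $h_j$ has bounded multiplicity, so the change of variables $w=h_j(z_2)$ (Lemma~\ref{L:changevar}) transforms the remaining $|1-rz_1h_j(z_2)|^{-4}|h_j'|^2$ integral into the genuine model integral $\int_{\DD^2}|1-rz_1w|^{-4}\,dA(z_1)dA(w)$, to which Lemma~\ref{model} applies. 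A careful bookkeeping of derivatives of products of $q_r$-factors (Lemma~\ref{qual}) distributes the available powers of $(1-r)$ against the denominators and produces the inequality \eqref{mainest}; the condition $\alpha_1+\alpha_2\le1$ enters precisely when one splits the single remaining factor $(1-|z|)^{-(2k-1)}$ between the two weights. This factorization-plus-Hopf argument is the missing idea in your sketch.
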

\begin{lemma}\label{sides}
Suppose a polynomial $p\in \CC[z_1,z_2]$ is irreducible, has no zeros on $\DD^2$, and is not a polynomial in one variable only. Then $p$ has no zeros on $(\DD\times \TT)\cup (\TT\times \DD)$.
\end{lemma}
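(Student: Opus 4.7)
The plan is to argue by contradiction using a slicing argument combined with Hurwitz's theorem. By the symmetry $(z_1,z_2)\leftrightarrow (z_2,z_1)$, it is enough to rule out zeros on $\DD\times\TT$. So suppose for contradiction that $p(z_1^0,\zeta_2^0)=0$ with $|z_1^0|<1$ and $|\zeta_2^0|=1$, and introduce the one-variable slice
\[ q(z_1):=p(z_1,\zeta_2^0)\in\CC[z_1], \]
which, by assumption, has a zero at $z_1^0\in\DD$.

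The first step is to verify that $q$ is not the zero polynomial. If $q\equiv 0$, then by the factor theorem $(z_2-\zeta_2^0)$ divides $p$ in $\CC[z_1,z_2]$; since $p$ is irreducible, this forces $p$ to be a scalar multiple of $z_2-\zeta_2^0$, a polynomial in one variable only, contradicting the hypothesis. Thus $q$ is a nonzero polynomial in $z_1$ with $q(z_1^0)=0$.

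The second step is to slide $\zeta_2^0$ slightly into the disk. For $t\in(0,1)$ set $\zeta_2^{(t)}:=t\zeta_2^0$ and $p_t(z_1):=p(z_1,\zeta_2^{(t)})$. As $t\to 1^-$, the coefficients of $p_t$ converge to those of $q$, so $p_t\to q$ uniformly on compact subsets of $\CC$. Since $q\not\equiv 0$ has a zero at $z_1^0\in\DD$, Hurwitz's theorem produces, for $t$ sufficiently close to $1$, a zero of $p_t$ in a small neighborhood of $z_1^0$, hence in $\DD$. Combined with $|\zeta_2^{(t)}|=t<1$, this yields a zero of $p$ in $\DD^2$, contradicting the hypothesis that $p$ has no zeros in the bidisk.

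The only subtle point, and the step I expect to be the main obstacle, is the first one: one must genuinely use both the irreducibility of $p$ and its two-variable dependence to preclude the possibility that the slice polynomial $q$ is identically zero. After that, the Hurwitz argument is completely standard. It is worth noting that no estimates involving the $\Dveca$ norm enter this lemma; it is a purely function-theoretic statement about zero sets of stable polynomials, which will later be invoked when manipulating the dilations $F_r$ of Lemma~\ref{twovardilation}.
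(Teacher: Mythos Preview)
Your argument is correct and is essentially the same as the paper's: both proofs apply Hurwitz's theorem to the one-variable slices $z_1\mapsto p(z_1,a)$ as $a$ moves between $\DD$ and $\TT$, and both invoke irreducibility together with two-variable dependence to exclude the degenerate case where the limiting slice vanishes identically (equivalently, where $z_2-\zeta_2^0$ divides $p$). The only cosmetic difference is that you argue by contradiction, assuming a boundary zero and pushing inward, whereas the paper argues directly by starting from interior slices with no zeros and passing to the boundary.
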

\begin{proof}
 For fixed $a \in
\DD$, $z \mapsto p(z,a)$ has no zeros in $\DD$.  By Hurwitz's theorem,
if we send $a \to \TT$, then $z\mapsto p(z,a)$ either has no zeros in $\DD$,
or is identically zero.  If it is identically zero, then $z_2-a$
divides $p(z_1,z_2)$, contrary to our assumptions.  Thus, for each $a
\in \TT$, $z \mapsto p(z,a)$ has no zeros in $\DD$.  By symmetry, 
$p$ has no zeros on $\DD\times \TT$ either.
\end{proof}
\begin{proof}[Proof of Theorem \ref{T:raddiltwovars}] 

Assuming Lemma \ref{twovardilation}, cyclicity of $p$ in $\Dveca$ with $\alpha_1+\alpha_2\leq 1$ follows as before. To see this, set $p_r(z_1,z_2)=p(rz_1, z_2)$. By Lemma~\ref{sides}, the polynomial $p_r$ does not vanish on the closed bidisk for $r<1$. Thus each $1/p_r$ extends holomorphically past the closed bidisk, and hence is a multiplier, which in turn implies $F_r\in [p]_{\veca}$ for each $r<1$. Thus $1\in [p]$, as a limit of $F_r$, and we are done.
\end{proof}

Before we give the proof of Lemma \ref{twovardilation}, we make some preliminary remarks, and a few reductions. Suppose that the given polynomial $p$ has bidegree $(m,n)$---that is, degree $m$ in the variable $z_1$ and degree $n$ in the variable $z_2$. First of all, we can view $p$ as a polynomial in the variable $z_1$, 
\[p(z_1,z_2) = A_m(z_2) z_1^m+ \cdots + A_1(z_2) z_1 + A_0(z_2),\]
with coefficients $A_k$ that are polynomials in $z_2$. 
Since $p$ does not vanish on the bidisk we infer that $A_0$ does not
vanish on the disk.  In fact, $A_0$ does not vanish on the unit
circle, for if it did then $p$ would vanish on $\{0\}\times \TT$.
Such a zero is ruled out by the assumptions that $p$ is irreducible and
depends on both variables via Lemma~\ref{sides}.

For each fixed $z_2$ where $A_m(z_2) \ne 0$, we can therefore factor
$p$ into linear factors in the variable $z_1$,
\begin{equation}
p(z_1,z_2)=A_0(z_2)(1-z_1 h_1(z_2))\cdots (1-z_1 h_m(z_2)).
\label{zonefactorization}
\end{equation}
Before we proceed, we need to discuss the nature of the functions that appear in the 
right-hand side: the $h_j$ require a particularly careful treatment.

The function $A_0=A_0(z_2)$ is a polynomial in one complex
variable with no zeros in the closed unit disk, and hence $A_0$  is a multiplier and cyclic 
in every space $\Dveca$.  We can of course reverse the roles of $z_1$
and $z_2$ in the factorization and write for each fixed $z_1$ (outside
a finite set) 
\begin{equation}
p(z_1,z_2)=B_0(z_1)(1-z_2 g_1(z_1))\cdots (1-z_2 g_n(z_1)),
\label{ztwofactorization}
\end{equation}
and again obtain a non-vanishing polynomial $B_0=B_0(z_1)$ furnishing
a cyclic multiplier.  Since the product $\phi f$ of a function
$f\in \Dveca$ and a multiplier $\phi \in M(\Dveca)$ is cyclic if and
only if both factors are cyclic \cite[Proposition 8]{BS84}, it is
enough to establish cyclicity of the factor that depends on both
variables.  Namely, we can drop the factors $A_0$ or $B_0$.

In summary, we may assume without loss of generality that, outside of a discrete set, 
the polynomial $p$ is locally of the form
\begin{equation}
p(z_1,z_2)=(1-z_1 h_1(z_2))\cdots (1-z_1 h_m(z_2)).
\label{linearfacts}
\end{equation}
The functions $h_j=h_j(z_2)$, $j=1,\ldots, m$, are more problematic as they are
no longer polynomials (or even single-valued), but can be represented
as branches of algebraic functions. In particular, the factors $1-h_j
z_1$ are in general not elements of $\Dveca$.  

\begin{example}\label{E: branchpoints}
  For each $a\in [0,1)$, the zero set of the irreducible polynomial
  $p=1-az^2_1-az_2+z^2_1z_2$ 
 meets $\overline{\mathbb{D}^2}$ in a
  curve in $\TT^2$. For this polynomial, we have the local
  factorization
\[p(z_1,z_2)=(1-az_2)\left(1-\left(\frac{a-z_2}{1-az_2}\right)^{1/2}z_1\right)
\left(1+\left(\frac{a-z_2}{1-az_2}\right)^{1/2}z_1\right),\]
and hence $h_j$, $j=1,2$, have a branch point at $a\in \DD$.
\end{example} To get around the difficulty posed by the nature
of the functions $h_k$, we let $D=D_p$ be a simply connected subdomain
of $\DD$ such that each $h_j$ extends holomorphically to $D$, and
$|\DD| = |D|$ (here $|\cdot |$ denotes 2-dimensional Lebesgue
measure).  More precisely, first notice that we allow some $h_j(z_2)$
to equal zero but this necessarily occurs at the finitely many $z_2$
where $A_m(z_2) = 0$.  We can think of $(\infty,z_2)$ as a root of $p$
in such cases.  Now define
\[
\begin{aligned}
S =& \left\{z_2 \in \CC\colon p(\cdot, z_2), \frac{\partial p}{\partial
  z_1}(\cdot, z_2) \text{ have a common root} \right\} \\
&\cup \{z_2 \in \CC\colon A_{m}(z_2) = A_{m-1}(z_2) = 0\}
\label{singset}
\end{aligned}
\]
This set is necessarily finite because $p$ is irreducible.  The first
set in the union consists of the points where $p(\cdot,z_2)$ has a
repeated root and the second set consists of the points where
$p(\cdot,z_2)$ has a repeated root ``at $\infty$.''  On
$\CC \setminus S$, there are distinct $h_1(z_2),\dots, h_m(z_2)$ which
can be defined locally in an analytic fashion and satisfy
\eqref{linearfacts}.  Given $z \in S$ form the ray
$R_z = \{tz : t\geq 1\}$, let $\mathcal{R} = \cup_{z \in S} R_z$, and
finally define
\[
D := \DD \setminus \mathcal{R}.
\]
On $D$, the $h_k$ can be analytically continued to single-valued
analytic functions satisfying \eqref{linearfacts}.  Notice that across
the boundary slits of $D$ the $h_k$ can be analytically continued
necessarily to some $h_j$.  Near a point of $S$, two of the $h_k$ tend
to the same value (the case of $A_m(z_2)=A_{m-1}(z_2)=0$ means that
two of the $h_k$ tend to zero at $z_2$ as in Example
\ref{E: branchpoints}).  The $h_j$ will extend analytically to any
point of $\TT\setminus S$.

We may assume $0\in D$; if this is not the case, we can
replace $p(z_1,z_2)$ by $(1-az_2)^np(z_1, \varphi(z_2))$, where
$\varphi=\varphi(z)$ is a M\"obius transformation of the unit
disk. Cyclicity of one of these two functions implies cyclicity of the
other. Cf. Example \ref{E: branchpoints}, where $1+z^2_1z_2$ ($a=0$) is
transformed into $1-az^2_1-az_2+z^2_1z_2$ ($a>0$) in precisely this way.
\begin{lemma}\label{L: hfunctlemma}
Let $h_k\colon D\rightarrow \CC$ be as above. Then $|h_k(z)|<1$ in
$D$, $h_k$ has bounded multiplicity, and $(1-|h_k(z)|^2)/(1-|z|^2)\geq C >0$.
\end{lemma}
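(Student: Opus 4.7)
My proof would handle the three claims in turn, with the main content in the third. For $|h_k(z)|<1$ on $D$: for any $z\in D\subset\DD$, the points $1/h_k(z)$ are roots in the first variable of $p(\cdot,z)$, and by Lemma \ref{sides} together with the hypothesis that $p$ has no zeros on $\DD^2$, these roots lie in $\CC\setminus\overline{\DD}$, giving $|h_k(z)|<1$. For bounded multiplicity: for $w\ne 0$ the preimage $h_k^{-1}(\{w\})$ is contained in the zero set of the one-variable polynomial $z\mapsto p(1/w,z)$, of degree at most $n:=\deg_{z_2}p$; the value $w=0$ occurs only at the finitely many zeros of $A_m$. Thus $h_k$ has valence at most $n$.

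The ratio bound is the substantive step. Set $\rho(z)=(1-|h_k(z)|^2)/(1-|z|^2)$, positive and continuous on $D$. The strategy is to show $\liminf_{z\to z_0,\,z\in D}\rho(z)>0$ for every $z_0\in\overline{\DD}$, then deduce the uniform bound $\rho\ge C$ from lower semicontinuity of the $\liminf$-extension of $\rho$ on the compact set $\overline{\DD}$. For $z_0\in\DD$, possibly on a slit removed to form $D$, any subsequential limit of $h_k(z)$ along $z\to z_0$ in $D$ equals some $h_j(z_0)$ with $|h_j(z_0)|<1$ by the first-claim argument applied at $z_0$; so $\limsup|h_k(z)|<1$ and $\rho$ stays bounded below near such $z_0$. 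For $z_0\in\TT$ outside the finite exceptional set consisting of $S\cap\TT$ together with the slit endpoints on $\TT$, the branch $h_k$ extends holomorphically across $\TT$ at $z_0$ with $|h_k|\le 1$ on $\DD$ near $z_0$; either $|h_k(z_0)|<1$ and $\rho\to\infty$ trivially, or $h_k(z_0)=\eta_0\in\TT$. In the latter case, a first-order Taylor comparison of $|h_k(z)|\le 1$ against $|z|\le 1$ for $z\in\DD$ near $z_0$ forces $\lambda:=\bar{\eta_0}z_0 h_k'(z_0)$ to be a strictly positive real, and a direct expansion then yields $\rho(z)\to\lambda$.

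For the finitely many remaining boundary points in $(S\cap\TT)\cup(\text{slit endpoints on }\TT)$, a Puiseux expansion of $h_k$ shows that $1-|h_k(z)|^2$ vanishes to order $|z-z_0|^{1/q}$ for some ramification index $q\ge 1$ while $1-|z|^2$ vanishes to order $|z-z_0|$, giving $\rho(z)\to\infty$ (the unramified case $q=1$ reduces to the boundary Schwarz analysis above). The main technical obstacle is justifying positivity $\lambda>0$ and the Puiseux order: one must rule out $h_k'(z_0)=0$ in the boundary Schwarz step, for otherwise iterating the sign comparison on higher-order Taylor terms forces all coefficients to vanish, making $h_k$ locally constant and hence, as an algebraic branch, globally constant — contradicting irreducibility of $p$ and its dependence on both variables.
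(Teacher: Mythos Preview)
Your handling of the first two claims matches the paper's. For the ratio bound, however, the paper takes a quite different and much shorter route: it sets $u(\lambda)=\max_k|h_k(\lambda)|$, notes that this is well-defined and subharmonic on all of $\overline{\DD}\setminus S$ (the maximum is invariant under the monodromy permuting the $h_k$ across the slits, and each $h_k$ extends analytically across $\TT\setminus S$), extends $u$ across the finite set $S$ by removing singularities, and then applies the Hopf lemma for subharmonic functions to obtain $u(\lambda)-1\le C(|\lambda|-1)$ on $\DD$ in one stroke. No branch-by-branch boundary analysis, no Julia--Carath\'eodory computation, and no Puiseux expansion are needed.

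Your local angular-derivative argument at the regular boundary points is correct, but the Puiseux step at the exceptional boundary points has a gap as written. The assertion that $1-|z|^2$ ``vanishes to order $|z-z_0|$'' is only an upper bound: along tangential approach to $\TT$, $1-|z|^2$ is arbitrarily small compared with $|z-z_0|$, so a naive order comparison does not yield $\rho\to\infty$. Likewise the leading Puiseux exponent need not be $1/q$; it is some $\gamma=j/q$ with $\gamma\le1$ (the constraint $|h_k|<1$ on a half-disk forces $\gamma\le1$), and when $\gamma=1$ one gets $\rho\to\lambda>0$ rather than $\rho\to\infty$, exactly as in your unramified case. To make the branch-point case rigorous you must compare the two angular behaviours: writing $\bar z_0(z-z_0)=-re^{i\psi}$ with $|\psi|<\pi/2$, one finds $\rho\asymp r^{\gamma-1}\cos(\beta+\gamma\psi)/\cos\psi$ for a suitable phase $\beta$, and one must check that $\cos(\beta+\gamma\psi)/\cos\psi$ stays bounded below on $(-\pi/2,\pi/2)$ (it does, by L'H\^opital at the endpoints). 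This works, but it is precisely a local, sectorial form of the Hopf lemma that the paper invokes once, globally, for the subharmonic function $u$.
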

\begin{proof}
If $|h_k(z)|\geq 1$ for some $z\in D$ then the polynomial $p$ would vanish inside $\DD^2$, contradicting our assumptions. Hence $|h_k(z)|<1$ for all $z\in D$, and for every $k$. Furthermore, if for some $a\in \DD$ it were the case that $h_k(z_2)=a$ for more than $n$ values of $z_2$, then $p(a^{-1},z_2)$ 
would have more than $n$ zeros, which is impossible since $p$ is irreducible of bidegree $(m,n)$.

Next, put
$$u(\lambda):= \max \{ |h_k(\lambda)|\colon\ k=1,\ldots,n\},\quad
\la\in \overline{\DD}.$$
By the discussion before the lemma, this is well-defined in a
neighborhood of $\overline{\DD}$ minus $S$ since each $h_k$
analytically continues across the boundary slits of $D$ and to
$\TT\setminus S$.  Thus $u$ is subharmonic on
$\overline{\DD}\setminus S$ and extends to be subharmonic on
$\overline{\DD}$ because the points of $S$ will be removable
singularities. Clearly $u(\lambda)<1$, $\lambda\in \DD$.  

According to the Hopf lemma for subharmonic functions (see
\cite[Proposition 12.2]{ForStenBook}) there is $C>0$ such
that $$u(\la) - 1\leq C( |\la|-1),\quad \la \in \DD.$$ The last
assertion of the lemma follows.
\end{proof}

Finally, we shall need the following lemma.
\begin{lemma}[Analytic maps of bounded multiplicity]\label{L:changevar}
Let $D\subset \DD$ be a domain, and suppose $\phi:D\to \DD$ is an analytic map of multiplicity at most 
$M$. Then, for any non-negative function $g\in L^2(\DD)$,  we have
\begin{equation}
\label{multiplicityest}\int_D (g\circ \phi(z)) |\phi'(z)|^2  dA(z)\leq M \int_\DD g(w)dA(w).
\end{equation}
\end{lemma}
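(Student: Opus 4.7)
The plan is to view this as a direct application of the change-of-variables (area) formula for analytic maps. Since $\phi$ is holomorphic, when regarded as a smooth map $D\to\RR^2$ its Jacobian determinant is nothing other than $|\phi'(z)|^2$; in particular it is non-negative, and vanishes only on the discrete set of critical points of $\phi$. Thus the setting is especially clean: the usual area formula gives, for any non-negative measurable $g$,
\begin{equation*}
\int_D g(\phi(z))\,|\phi'(z)|^2\,dA(z)=\int_{\DD} g(w)\,N(w)\,dA(w),
\end{equation*}
where $N(w)=\#\{z\in D:\phi(z)=w\}$ is the counting function of $\phi$ over $D$.

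With this identity in hand, the hypothesis that $\phi$ has multiplicity at most $M$ means precisely $N(w)\le M$ for almost every $w\in\DD$ (the set where $N(w)>M$, if any, consists of critical values together with at most a discrete set, hence is $dA$-null). Combining with $g\ge 0$,
\begin{equation*}
\int_D g(\phi(z))\,|\phi'(z)|^2\,dA(z)=\int_{\DD} g(w)\,N(w)\,dA(w)\le M\int_{\DD} g(w)\,dA(w),
\end{equation*}
which is exactly the inequality claimed.

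The only genuine point to verify is the area formula in this generality; this is standard (see, e.g., Federer or any treatment of the area formula for Lipschitz maps applied to a holomorphic $\phi$, which is locally Lipschitz on $D$). If one prefers to avoid quoting general geometric measure theory, one can instead proceed by a partition argument: outside the discrete critical set of $\phi$, the domain $D$ can be written as a countable disjoint union (modulo null sets) of open sets on each of which $\phi$ is a biholomorphism onto its image; apply the classical change of variables on each piece and sum, using that each point $w\in\DD$ is covered at most $M$ times. The main mild obstacle is thus bookkeeping at the critical set and at the boundary of $D$, both of which are $dA$-null and so do not affect the integrals.
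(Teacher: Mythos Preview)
Your argument is correct: the area formula for holomorphic maps (with Jacobian $|\phi'|^2$) immediately gives $\int_D (g\circ\phi)|\phi'|^2\,dA=\int_\DD g\,N\,dA$, and the multiplicity bound $N\le M$ finishes it. The paper does not supply its own proof but simply cites \cite[Lemma~6.2.2]{EKMRBook}; your sketch is the standard proof one finds there.
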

\begin{proof}
See \cite[Lemma 6.2.2]{EKMRBook}.
\end{proof}
Since the $h_k$ appearing in Lemma~\ref{L: hfunctlemma} are of bounded multiplicity, we apply 
Lemma \ref{L:changevar} to $g=h_k$, and will do so frequently in what follows.

\medskip
To begin our proof of Lemma \ref{twovardilation}, we set, for
$r\in (0,1)$,
$$F_r(z_1,z_2) = 
q_r(z_1,h_1(z_2)) \cdots q_r(z_1, h_m(z_2)),$$ where 
\[q_r(z_1,z_2) = \frac{1- z_1z_2}{1-rz_1z_2}.\]
Since pointwise convergence of $1/p_r\to 1/p$ as $r\to 1^{-}$ clearly
holds, it suffices, as in the proof of Lemma \ref{onevardilation}, to
show that the $\Dveca$-norm of $F_r$ remains bounded as $r \to 1^{-}$.

\begin{lemma} \label{model}
\[
\int_{\DD^2} \frac{1-r}{|1-rz_1z_2|^4} dA(z_1)dA(z_2) 
\]
is bounded by a constant for $r \in (0,1)$.
\end{lemma}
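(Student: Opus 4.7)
The plan is to apply Fubini's theorem and iterate the Forelli--Rudin estimate \eqref{integralasymptotics}, one variable at a time. Fix $z_2 \in \DD$ and treat the inner integral
\[
I(z_2):=\int_\DD \frac{dA(z_1)}{|1-(rz_2)z_1|^4}.
\]
Setting $w=rz_2 \in \DD$, this matches the left-hand side of \eqref{integralasymptotics} with $a=0$ and $2+a+b=4$, i.e.\ $b=2>0$. The asymptotic then yields $I(z_2) \leq C(1-r^2|z_2|^2)^{-2}$ with $C$ independent of $r$ and $z_2$.

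Plugging this bound back in and pulling $(1-r)$ outside, the task reduces to estimating
\[
(1-r)\int_\DD \frac{dA(z_2)}{(1-r^2|z_2|^2)^2}.
\]
The integrand is rotation-invariant, so I would pass to polar coordinates and use the substitution $u = 1-r^2s^2$ to reduce to an elementary one-dimensional integral evaluating to $(1-r^2)^{-1}$. Combining, the full double integral is bounded above by $C(1-r)/(1-r^2) = C/(1+r) \leq C$ for all $r \in (0,1)$, which is exactly the claim.

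I do not expect a real obstacle: the argument is a routine Fubini-plus-Forelli--Rudin calculation. The only subtlety worth flagging is that \eqref{integralasymptotics} is stated as an asymptotic as $|w|\to 1^-$, whereas here I need a \emph{uniform} upper bound $I(z_2) \leq C(1-|w|^2)^{-2}$ valid for all $w \in \DD$. This is immediate: the right-hand side $(1-|w|^2)^{-2}$ is bounded below by $1$ on $\DD$, while $I(z_2)$ is bounded whenever $|w|$ stays away from $1$, so the asymptotic combined with this trivial bound gives a uniform upper estimate on all of $\DD$.
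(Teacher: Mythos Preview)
Your argument is correct and is essentially the same as the paper's: both apply the Forelli--Rudin estimate \eqref{integralasymptotics} with $a=0$, $b=2$ to one variable, then evaluate the remaining radial integral to obtain $C/(1+r)$. Your remark that the asymptotic in \eqref{integralasymptotics} upgrades to a uniform bound on all of $\DD$ is a useful clarification that the paper leaves implicit.
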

\begin{proof}
By \eqref{integralasymptotics} the integral is bounded by a constant times
\[
\int_{\DD} \frac{1-r}{(1-|rz_1|^2)^2} dA(z_1)
\]
and the above integral is equal to a constant times $1/(1+r)$.
\end{proof}

We record some qualitative features of the derivatives of $F_r$.
\begin{lemma} \label{qual} The function
\[
q_r(z_1,z_2) = 1 + (r-1)\frac{z_1z_2}{1-rz_1 z_2}
\]
is bounded in $\DD^2$ independent of $r$.  For fixed $k$, the
derivatives $\partial_{z_1}^k q_r$ and
$\partial^{k-1}_{z_1} \partial_{z_2}q_r$ are of the form
\[
(1-r)\frac{G(z_1,z_2,r)}{(1-rz_1z_2)^{k+1}}
\]
for some polynomial $G$. 
Consequently, the derivatives 
\[
\text{(a) } \partial^k_{z_1} F_r(z_1,z_2) \text{ and
} \text{(b) } \partial^{k}_{z_1} \partial_{z_2} F_r(z_1,z_2)
\]
can be written as a finite sum of terms of the form $B(z;r) P(z;r) H(z;r)$
where 
\begin{itemize}
\item $B$ is bounded on $\DD\times D\times (0,1)$
\item for some integer $l$, $0\leq l\leq k$, $P(z;r)$ equals the product of  $(1-r)^l$ and
  $k+l$ terms of the form $\frac{1}{1-rz_1h_j(z_2)}$  for multiple
choices of $j$,
\item in case (a), $l\ne 0$ and $H$ is unnecessary and in case (b) $H$
  is of the form $(1-r)\frac{h_j'(z_2)}{(1-rz_1h_j(z_2))^2}$ for some
  $j$.
\end{itemize}

Assuming $1\geq \al_1+\al_2$ and $1>\al_2$, and $2k-1 > \al_1$, we have the estimate
\begin{multline} \label{mainest}
|\partial_{z_1}^k \partial_{z_2} F_r(z)|^2 \\  \leq
\frac{C(1-r)}{(1-|z_1|^2)^{2k-1-\al_1} (1-|z_2|^2)^{1-\al_2}}
  \sum_{j=1}^{m} \frac{|h_j'(z)|^2}{|1-rz_1h_j(z_2)|^4} 
\end{multline}
valid in $\DD\times D$ for some constant $C>0$.

\end{lemma}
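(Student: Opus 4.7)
The first two assertions about $q_r$ are essentially direct calculations. For the boundedness, the identity $q_r = 1 + (r-1)z_1z_2/(1-rz_1 z_2)$ already supplied in the statement makes things transparent: on the closed bidisk $|1-rz_1z_2|\geq 1-r$, so $|(r-1)z_1z_2/(1-rz_1 z_2)|\leq 1$. For the explicit form of the derivatives, the cleanest route is to use the partial fraction $z_1z_2/(1-rz_1z_2) = -1/r + (1/r)(1-rz_1z_2)^{-1}$ for $r\in(0,1)$, and then differentiate $(1-rz_1z_2)^{-1}$ directly; this produces a polynomial in $(z_1,z_2,r)$ over $(1-rz_1z_2)^{k+1}$, with an overall prefactor $(r-1)$ providing the required $(1-r)$.

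For the structural claim on derivatives of $F_r = \prod_{j=1}^m q_r(z_1,h_j(z_2))$, I would apply the generalized Leibniz rule. Each resulting summand is a product, one factor per index $j$: if no derivative lands on the $j$th factor, it contributes a copy of $q_r(z_1,h_j(z_2))$ (absorbed into $B$); if $k_j\geq 1$ pure $z_1$-derivatives land on it, it contributes one power of $(1-r)$ and $k_j+1$ powers of $(1-rz_1h_j(z_2))^{-1}$ (absorbed into $P$); and in case (b) exactly one index $i$ also receives the $\partial_{z_2}$, which via the chain rule produces a factor $h_i'(z_2)$ together with the rational structure above in $(z_1,h_i(z_2))$. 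Isolating the piece $(1-r)h_i'(z_2)/(1-rz_1h_i(z_2))^2$ as $H$ and collecting everything else into $B\cdot P$ gives the advertised form, with $l$ equal to the number of indices $j\neq i$ with $k_j\geq 1$ (or simply the number of $j$ with $k_j\geq 1$ in case (a)). A direct count then confirms that $(1-r)^l$ and $k+l$ denominator factors appear in $P$.

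To prove \eqref{mainest}, I would first match the $H$-piece of a generic summand to the factor $(1-r)|h_j'(z_2)|^2/|1-rz_1h_j(z_2)|^4$ on the right (with the sum over $j$ absorbing the choice of the distinguished index $i$). It then remains to bound
\[
|P|^2\cdot (1-r) = (1-r)^{2l+1}\prod_{t=1}^{k+l}|1-rz_1 h_{j_t}(z_2)|^{-2}
\lesssim (1-|z_1|^2)^{-(2k-1-\al_1)}(1-|z_2|^2)^{-(1-\al_2)}.
\]
Two elementary inequalities drive the estimate: first, $(1-r)\leq |1-rz_1h_j(z_2)|$, which lets one trade each surplus $(1-r)$ for a factor $|1-rz_1h_j(z_2)|$; second, $|1-rz_1h_j(z_2)|\geq \max(1-|z_1|,\,1-|h_j(z_2)|)\geq (1-|z_1|)^s(1-|h_j(z_2)|)^{1-s}$ for any $s\in[0,1]$, combined with $1-|h_j(z_2)|\gtrsim 1-|z_2|$ from Lemma~\ref{L: hfunctlemma}. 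The second inequality allows one to distribute an arbitrary total power of $|1-rz_1h_j|^{-1}$ between $(1-|z_1|^2)$ and $(1-|z_2|^2)$ in any proportion one likes.

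The main obstacle is the exponent bookkeeping. After pairing the $(1-r)^{2l+1}$ factors with $(1-rz_1h_{j_t})^{-2}$ factors using the first inequality, the total power of $|1-rz_1h_{j_t}|^{-1}$ left to distribute is at most $2k-1$; the target exponents on the right sum to $(2k-1-\al_1)+(1-\al_2)=2k-\al_1-\al_2\geq 2k-1$, so the required distribution exists precisely when $\al_1+\al_2\leq 1$. The subcase $l=k$ requires one additional application of $(1-r)\leq |1-rz_1h_j|$ to consume a final surplus $(1-r)$, after which the same budget applies. The assumptions $1>\al_2$ and $2k-1>\al_1$ merely ensure that the target weight has genuinely positive singular exponents, so no degeneracy arises.
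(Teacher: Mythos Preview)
Your proof is correct and follows essentially the same approach as the paper's. The paper dismisses the first two assertions about $q_r$ as calculus exercises and then carries out exactly the exponent bookkeeping you describe: cancel $2l+1$ of the $2(k+l)$ denominator factors against the available powers of $(1-r)$, leaving $2k-1$ factors of $|1-rz_1h_j|^{-1}$, each of which is bounded both by $C(1-|z_1|^2)^{-1}$ and (via Lemma~\ref{L: hfunctlemma}) by $C(1-|z_2|^2)^{-1}$; the parameter condition $\alpha_1+\alpha_2\le1$ then allows the split into the target exponents. Your separate treatment of the case $l=k$ is an artifact of pairing each $(1-r)$ with a full square $|1-rz_1h_{j_t}|^{-2}$ rather than with a single factor $|1-rz_1h_{j_t}|^{-1}$; the paper avoids this by cancelling at the level of single factors, but the outcome is identical.
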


\begin{proof}
The formula for $q_r$ shows $|q_r|\leq 2$.  The other formulas are
calculus exercises.

To get the final estimate let $j_1,\dots, j_{k+l} \in \{1,\dots, m\}$.
Then, $|P|^2$ is of the form
\[
|P(z;r)|^2 = \frac{(1-r)^{2l}}{\prod_{i=1}^{k+l} |(1-rz_1 h_{j_i}(z_2))|^2}.
\]
Terms of the form $|1-rz_1h_j(z_2)|$ appear in the denominator $2k+2l$
times.  We use the estimate $|1-rz_1h_j(z_2)| \geq (1-r)$ for $2l+1$
of the terms.  Evidently, $|1-rz_1h_j| \geq C(1-|z_1|^2)$ and by Lemma
\ref{L: hfunctlemma} we have $|1-rz_1 h_j| \geq C(1-|z_2|^2)$.  Using
either of these two estimates for the remaining $2k-1$ terms in the
denominator of $|P|^2$ gives the bound
\begin{equation} \label{P2}
|P|^2 \leq \frac{C}{1-r} \min\left\{ \frac{1}{(1-|z_1|^2)^{2k-1}},
  \frac{1}{(1-|z_2|^2)^{2k-1}}\right\}
\end{equation}
The assumptions on the parameters mean $2k-1 \leq
(2k-1-\al_1) + (1-\al_2)$ with both summands non-negative.  This
immediately yields 
\[
|P|^2 \leq \frac{C}{1-r} \frac{1}{(1-|z_1|^2)^{2k-1-\al_1}
  (1-|z_2|^2)^{1-\al_2}}
\]
for some constant $C>0$.  Combining this with the definition of $H$,
we get 
\[
|BPH|^2 \leq C\frac{(1-r)|h_j'(z_2)|^2}{(1-|z_1|^2)^{2k-1-\al_1}
  (1-|z_2|^2)^{1-\al_2} |1-rz_1 h_j(z_2)|^4}.
\]
Since $|\partial^k_{z_1} \partial_{z_2}F_r|^2$ can be bounded by a
finite sum of such terms the main estimate \eqref{mainest} holds.

\end{proof}

In view of our construction of the domain $D$, the seminorm
$\Dveca(F_r)$ is not affected if we restrict integration to
$\DD\times D$. Note that the integral formula for $\Dveca(F_r)$ is
valid if both $\alpha_1<2$ and $\alpha_2<2$.  Estimate \eqref{mainest}
can be used with $k=1$ if we assume $\al_1,\al_2<1$.  Certainly, since
$\al_1+\al_2 \leq 1$ we can assume without loss of generality that
$\al_2<1$.  Later on we will adjust the argument in case $\al_1 \geq
1$. 

The problematic term in $\Dveca(F_r)$
is the integral
\begin{equation}\label{eq}
\int_{\DD\times D}|\partial_{z_1} \partial_{z_2}
F_r(z_1,z_2)|^2(1-|z_1|^2)^{1-\al_1} (1-|z_2|^2)^{1-\al_2} dA(z_1) dA(z_2).
\end{equation}

By \eqref{mainest} with $k=1$, this is bounded by
\begin{multline*}
C(1-r) \sum_{j=1}^{m} \int_{\DD\times D}
\frac{|h_j'(z_2)|^2}{|1-rz_1h_j(z_2)|^4} dA(z_1) dA(z_2)\\ \leq C(1-r) m
\int_{\DD^2} \frac{1}{|1-rz_1z_2|^4} dA(z_1)dA(z_2)
\end{multline*}
by Lemma \ref{L:changevar}.  The integral on the right is bounded by a constant by
Lemma \ref{model}.  This proves $\Dveca(F_r)$ is bounded independent of
$r\in (0,1)$ when $\al_1,\al_2<1$.  

Suppose now that one of the parameters, say $\al_1$, satisfies
$\alpha_1\geq 1$. We then use that $f\in \Dveca$ if and only if
$\partial_{z_1}f\in \mathfrak{D}_{\alpha_1-2, \alpha_2}$ and
$f(0,\cdot) \in D_{\al_2}$ to switch from $F_r$ to
$\partial^{N-1}_{z_1} F_r$, where $N$ is chosen so that
$2N-1 >\alpha_1$. To bound the $\Dveca$-norm of $F_r$, it is now
enough to bound the norms $\|\partial_{z_1}^l F_r(0,\cdot)\|_{D_{\al_2}}$
for $l\leq N-1$, the one-variable norms
$\|\partial^{N-1}_{z_1} \partial_{z_2} F_r(0,\cdot)\|_{D_{\al_2}}$ and 
$\|\partial^{N}_{z_1} F_r(\cdot,0)\|_{D_{\alpha_1-2N}}$, and the double integral
\begin{equation} \label{hardterm}
\int_{\DD\times D}|\partial^{N}_{z_1} \partial_{z_2} F_r(z_1,z_2)|^2
(1-|z_1|^2)^{2N - 1 -\alpha_1}(1-|z_2|^2)^{1-\al_2} dA(z_1) dA(z_2).
\end{equation}

By case (a) of Lemma \ref{qual}, $\partial^l_{z_1}F_r(0,\cdot)$ is bounded; this
takes care of the contribution $\|\partial^l_{z_1} F_r(0,\cdot)\|_{D_{\al_2}}$.  By
case (b) of Lemma \ref{qual}, $|\partial_{z_1}^{N-1}\partial_{z_2}
F_r(0,z_2)|^2$ can be controlled by terms of the form
$|h_j'(z_2)|^2$.  But, since $\al_2 <0$
\[
\int_{D} |h_j'(z_2)|^2 (1-|z_2|^2)^{1-\al_2} dA(z_2) \leq 
\int_{D} |h_j'(z_2)|^2 dA(z_2) \leq C\int_{\DD} |z_2|^2 dA(z_2)
\]
which is finite.  By case (a) of Lemma \ref{qual}, to control
$|\partial_{z_1}^NF_r(z_1,0)|^2$ it suffices to control $|P(z_1,0;r)|^2$,
which by \eqref{P2} 
can be bounded by 
\[
\frac{1}{(1-u(0))^{2N}};\]
where $u(0) = \max_{j} |h_j(0)|$. Now this is enough to show that the integral $\int_{\DD}
|\partial_{z_1}^N F_r(z_1,0)|^2 dA_{\al_1-2N}(z_1)$ is bounded since
$\al_1-2N<0$.

Finally, \eqref{hardterm} can be bounded using \eqref{mainest} with
$k=N$ in the same way we bounded \eqref{eq}.  This concludes the proof
of Lemma \ref{twovardilation} and consequently Theorem
\ref{T:raddiltwovars}.

\subsection{Non-cyclicity for infinite zero sets}
We now complete the proof of our main theorem by proving that an
irreducible polynomial that has no zeros in $\DD^2$ but vanishes along
a curve in $\TT^2$ fails to be cyclic in certain $\Dveca$. In the sequel we lose no generality assuming that $\alpha_2\leq 1$.
\begin{theorem}\label{T: infinitezerosthm}
  Let $p\in \CC[z_1,z_2]$ be an irreducible polynomial depending on
  both variables with infinitely many
  zeros in $\TT^2$. Then $p$ is not cyclic in $\Dveca$ whenever
  $\alpha_1+\alpha_2>1$.
\end{theorem}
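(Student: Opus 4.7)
I will construct a nonzero bounded linear functional $L$ on $\Dveca$ that vanishes on the polynomial multiples of $p$. Since $p$ is a multiplier, $[p]_\veca=\overline{p\cdot\mathcal{P}}^{\,\veca}$, where $\mathcal{P}$ denotes two-variable polynomials; hence such an $L$ annihilates all of $[p]_\veca$ while $L(1)\ne 0$, forcing $1\notin[p]_\veca$ and so $p$ is not cyclic.

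Since $p$ is irreducible with infinitely many zeros on $\TT^2$, the set $\gamma:=\mathcal{Z}(p)\cap\TT^2$ is a real-analytic curve, smooth off a finite set. Fix a smooth point $\zeta_0\in\gamma$, parameterize a neighborhood of $\zeta_0$ on $\gamma$ real-analytically by $s\mapsto\zeta(s)$, $s\in(-\delta,\delta)$, with $\zeta(0)=\zeta_0$ and $\zeta'(0)\ne 0$, and take a smooth nonnegative cutoff $\psi$ supported in $(-\delta,\delta)$ with $\int\psi\,ds=1$. Define
\[
L(f):=\int_{-\delta}^{\delta}f(\zeta(s))\,\psi(s)\,ds.
\]
Trivially $L(1)=1$, and $L(pq)=0$ for every polynomial $q$, since $p\equiv 0$ on $\gamma$.

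The content of the proof then reduces to verifying that $L$ extends to a bounded functional on $\Dveca$ whenever $\alpha_1+\alpha_2>1$: a trace-type inequality along the curve $\gamma$. In the model case $p=1-z_1z_2$ discussed earlier, $\gamma$ is the antidiagonal $\{(e^{is},e^{-is})\}$ and the trace $f\mapsto f(e^{is},e^{-is})$ diagonalizes in Fourier, so that its $m$-th coefficient is $\sum_{k-l=m}a_{kl}$. Applying Cauchy--Schwarz term-by-term and using that $\sum_{k-l=m}(k+1)^{-\alpha_1}(l+1)^{-\alpha_2}$ is bounded uniformly in $m$ \emph{precisely} when $\alpha_1+\alpha_2>1$ yields $\|f\circ\zeta\|_{L^2(\TT)}\le C\|f\|_\veca$ and hence boundedness of $L$. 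For arbitrary $p$, the plan is to reduce to this antidiagonal computation by means of a local change of coordinates at $\zeta_0$ that straightens $\gamma$ to an arc of the antidiagonal; once this reduction is made, the one-dimensional summability identified above produces the required bound.

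\textbf{Main obstacle.} The delicate point is the local reduction of the arbitrary algebraic curve $\gamma$ to the antidiagonal model with preservation of the threshold $\alpha_1+\alpha_2>1$. The tangent direction of $\gamma$ at $\zeta_0$ is a priori an arbitrary real slope in angular coordinates, and only a specific direction produces the correct Fourier sum. One therefore needs the algebraic-geometric structure of $\gamma$---namely the rational-inner-function framework known to govern irreducible polynomials with no zeros in $\DD^2$ and zero curves on $\TT^2$---to justify both the existence of the straightening and the stability of the Cauchy--Schwarz estimate under the change of variables.
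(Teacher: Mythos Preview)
Your approach is quite different from the paper's, and the idea---building an annihilating functional by integrating along an arc of the torus zero curve---is natural. However, as you yourself flag, the proof is incomplete: the boundedness of $L$ on $\Dveca$ for a \emph{general} zero curve is asserted only by analogy with the antidiagonal model, and the ``local straightening'' step is not carried out. This is a real gap, not a formality. For the antidiagonal the Fourier computation is clean because the phase $k\theta_1+l\theta_2$ collapses to a single integer frequency $k-l$; for a generic arc $\theta_2=\phi(\theta_1)$ with $\phi$ nonlinear, $L(z_1^kz_2^l)=\int e^{i(k\theta_1+l\phi(\theta_1))}\tilde\psi(\theta_1)\,d\theta_1$ has an oscillatory phase with moving stationary points, and the Cauchy--Schwarz bookkeeping you sketch no longer organizes the sum $\sum_{k,l}|L(z_1^kz_2^l)|^2(k+1)^{-\alpha_1}(l+1)^{-\alpha_2}$ into diagonal blocks. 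A change of variables that maps $\gamma$ to the antidiagonal does not in general preserve the tensor weight $(k+1)^{\alpha_1}(l+1)^{\alpha_2}$, so the threshold $\alpha_1+\alpha_2>1$ is not a priori stable under it. When one of the parameters is negative the issue is sharper still: functions in $\Dveca$ need not have boundary values, so even the definition of $L$ must go through the reproducing kernel, and the relevant double integral $\iint K_{\veca}(\zeta(t),\zeta(s))\psi(s)\psi(t)\,ds\,dt$ involves a distributional kernel factor. None of this is addressed.

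For comparison, the paper avoids trace inequalities entirely. It argues by contraposition: assuming $p$ is cyclic, it uses the reflection symmetry forced by the infinite torus zero set (namely $z_1^mz_2^n\overline{p(1/\bar z_1,1/\bar z_2)}=c\,p$) to build holomorphic substitutions $z_2\mapsto b_j(z_2)$ mapping $\DD$ to $\DD$ with $p(z_1,b_1(z_2))\cdots p(z_1,b_n(z_2))=(1-z_1z_2)^n G(z_1,z_2)$, $G$ holomorphic near $\overline{\DD^2}$. A symmetrization lemma then shows that if $pP_\nu\to1$ in $\Dveca$, suitable symmetrized products $\prod_j q_{\nu_j}(z_1,b_{\sigma(j)}(z_2))$ also tend to $1$, so $(1-z_1z_2)^n$---and hence $1-z_1z_2$---would be cyclic, contradicting the model computation already done in the paper. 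The reduction to the model is thus algebraic (composition with bounded-multiplicity analytic self-maps of $\DD$, controlled via Lemma~\ref{L:changevar} and a Puiseux derivative estimate) rather than a local coordinate change on $\TT^2$, and the threshold $\alpha_1+\alpha_2>1$ is inherited directly from $1-z_1z_2$.
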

It is clear that the polynomial $p$ is not cyclic if it vanishes in the open bidisk, so we exclude that case in what follows. Then the idea of the proof is again to argue by reduction to the polynomial 
$1-z_1z_2$: we show that if the polynomial $p$ is cyclic, then so is $1-z_1z_2$, leading to a contradiction 
when $\alpha_1+\alpha_2>1$.

To this end, let $p$  be irreducible, depending on
  both variables with infinitely many
  zeros in $\TT^2$ and no zeros on $\DD^2$. Then, as is
pointed out in \cite[Section 2.2]{BKKLSS14}, we must have
\begin{equation}\label{eq:zerf} 
\mathcal{Z}(p)\subset \TT^2 \cup (\EE \times \DD) \cup (\DD\times \EE),
\end{equation}
where $\EE=\CC\setminus \overline{\DD}$. (This is not so in the case
of {\it finitely many} zeros: $2-z_1-z_2$ vanishes at
$(4,-2)\in \EE^2$ for example.)  This follows from the fact that the
irreducible polynomial $p$ will have infinitely many zeros in common
with $z_1^mz_2^n\overline{p(1/\bar{z}_1, 1/\bar{z}_2)}$ which implies
these polynomials are constant multiples of one another by B\'ezout's
theorem.  Thus, $p$ has no zeros in $\DD^2\cup\EE^2$.  By Lemma
\ref{sides}, $p$ has no zeros in $(\TT\times\DD)\cup(\DD\times \TT)$
and by reflection no zeros in $(\TT\times \EE)\cup(\EE\times \TT)$.
This leaves only the set \eqref{eq:zerf}. 

As in the previous section, we write
$$p(z_1,z_2) = A_0(z_2) (1- h_1(z_2) z_1)\cdots (1-h_m(z_2)z_1)$$
and 
$$p(z_1,z_2) = B_0(z_2) (1- g_1(z_1) z_2)\cdots (1-g_n(z_1)z_2).$$
Note that for every $j=1,\ldots, n$, there is a $k=k(j)$ such that 
\begin{equation}\label{eq:alpbet}
h_k(g_j(z_2^{-1})^{-1})=z_2.
\end{equation}
This holds because each root of $p$ can be written in the form
$(z^{-1}_1,g_j(z_1^{-1})^{-1}) = (h_k(z_2)^{-1}, z_2)$ (outside a finite
set of values of $z_1$ and $z_2$).   

Now put $b_j(z_2):=g_j(z_2^{-1})^{-1}$, and note that \eqref{eq:zerf} implies that $b_j(z_2) 
\in \DD$ for $z_2\in \DD\setminus S$, where $S$ is the set defined in the previous subsection.

 Applying Puiseux's theorem in the form of Lemma~\ref{rootglue} in Appendix B to 
\[F(z_1,z_2):=(z_1-b_1(z_2))\cdots (z_1-b_n(z_2))\] we obtain bounds on the derivatives of $b_j$ on a neighborhood of  the closed unit disk. Namely, for $a\in S$,
\begin{equation}
|b_j'(z)|\leq O\left(\frac{1}{|z-a|^{1-1/n}}\right).
\label{puiseuxest}
\end{equation}

From the equalities \eqref{eq:alpbet} we deduce that 
$$p(z_1, b_1(z_2) )\cdots p(z_1, b_n(z_2) ) = (1-z_1z_2)^n
G(z_1,z_2), $$
 where $G$ is a function holomorphic on a neighborhood of $\overline{\DD^2}$.
\begin{lemma}[Symmetrization lemma]\label{L:symmetrization}
Let $(P_{\nu})_{\nu=1}^{\infty}$ be a sequence of polynomials in two complex variables, set 
$q_{\nu}=p\cdot P_{\nu}$, and suppose $\|q_{\nu}-1\|_{\veca}\to 0$ as $\nu\to \infty$.

Then, for any $\epsilon>0$, there are $\nu_1,\ldots,\nu_n$ such that for any permutation $\sigma\colon 
\{1, \ldots,n\}\to \{1,\ldots, n\}$, we have the seminorm estimate \[\Dveca\left(q_{\nu_1}(z_1, b_{\sigma(1)}(z_2))\cdots q_{\nu_n}(z_1, b_{\sigma(n)}(z_2))\right)<\epsilon.\]
\end{lemma}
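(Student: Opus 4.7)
The plan is to choose $\nu_1,\dots,\nu_n$ successively, each so large that the new factor $Q_j^{(\sigma)}:=q_{\nu_j}(z_1,b_{\sigma(j)}(z_2))$ is, uniformly in the permutation $\sigma$, close enough to $1$ in the appropriate sense that its appearance in the running product adds only a small increment to the Dirichlet seminorm. After $n$ stages, the accumulated seminorm will sit below $\epsilon$ for every $\sigma$ simultaneously.

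The first ingredient is a single-factor estimate: for each branch $b_j$ from \eqref{ztwofactorization},
\[
\mathfrak{D}_{\vec\alpha}\bigl(q_\nu(z_1,b_j(z_2))\bigr) \longrightarrow 0\quad\text{as }\nu\to\infty.
\]
Since $\partial_{z_2}[(q_\nu-1)(z_1,b_j(z_2))] = b_j'(z_2)\cdot(\partial_{z_2}(q_\nu-1))(z_1,b_j(z_2))$, the mixed-derivative double integral is handled by the change of variables $w = b_j(z_2)$: each $b_j$ has bounded multiplicity (by the same argument as for $h_k$ in Lemma \ref{L: hfunctlemma}), so Lemma \ref{L:changevar} applies; and the corresponding Hopf-type bound $1-|b_j(z_2)|^2\gtrsim 1-|z_2|^2$ converts $(1-|z_2|^2)^{1-\alpha_2}$ into $(1-|w|^2)^{1-\alpha_2}$, legitimate because we assume $\alpha_2 \le 1$. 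The two one-variable boundary terms of $\mathfrak{D}_{\vec\alpha}$ are handled by continuity of slice-evaluation into $D_{\alpha_1}$ and $D_{\alpha_2}$.

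Next, expand
\[
\prod_{j=1}^n (1+R_j^{(\sigma)}) - 1 = \sum_{\emptyset\ne I\subseteq\{1,\dots,n\}}\prod_{i\in I}R_i^{(\sigma)},\qquad R_j^{(\sigma)} := Q_j^{(\sigma)} - 1,
\]
and, by subadditivity of $\mathfrak{D}_{\vec\alpha}^{1/2}$, reduce to bounding $\mathfrak{D}_{\vec\alpha}(\prod_{i\in I}R_i^{(\sigma)})$ for each nonempty $I$. For singletons this is the previous step; for $|I|\ge 2$, Leibniz expresses the mixed derivative of the product as a finite sum in which each summand contains at most one $R_{i_0}$ with its full mixed derivative, while the other $R_i$'s appear undifferentiated or with just one partial. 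Choosing $\nu_1 \ll \nu_2 \ll \cdots \ll \nu_n$ inductively, at stage $j$ the already-fixed $R_1,\dots,R_{j-1}$ are specific holomorphic functions on $\DD\times D$ with concrete pointwise bounds on themselves and on their first partial derivatives (uniform away from the finite singular set $S$, with boundary behaviour at $S$ controlled by the Puiseux estimate \eqref{puiseuxest}); so $\nu_j$ can be chosen large enough that the resulting $L^\infty$-factors times $\|q_{\nu_j}-1\|_{\veca}$ fall below any prescribed tolerance. The hard part is precisely the cross terms $\partial_{z_1} R_{i_1}\cdot \partial_{z_2}R_{i_2}$ with $i_1\ne i_2$, which do not factor as products of Dirichlet seminorms; the inductive scheme circumvents this because the first partials of the \emph{frozen} factors $R_1,\dots,R_{j-1}$ are fixed pointwise quantities that enter as bounded multipliers, leaving the smallness to the seminorm of the new $R_j$. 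Since there are only $n!$ permutations, the required largeness at each stage can be taken uniform in $\sigma$, and one single $(\nu_1,\dots,\nu_n)$ then works for every $\sigma$.
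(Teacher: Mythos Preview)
Your inductive strategy of choosing $\nu_1,\dots,\nu_n$ successively is the same as the paper's, and your single-factor estimate via the Hopf bound for $b_j$ plus Lemma~\ref{L:changevar} is sound (the Hopf bound for $b_j$ follows exactly as in Lemma~\ref{L: hfunctlemma}). But there is a genuine gap in the inductive step.

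You assert that ``the first partials of the \emph{frozen} factors $R_1,\dots,R_{j-1}$ are fixed pointwise quantities that enter as bounded multipliers.'' This is true for $\partial_{z_1}R_i$, but it is \emph{false} for $\partial_{z_2}R_i$: that partial carries the factor $b_{\sigma(i)}'(z_2)$, which by \eqref{puiseuxest} blows up like $|z_2-a|^{-(1-1/n)}$ near each $a\in S$. Consider the cross term where the frozen factor receives $\partial_{z_2}$ and the new factor receives $\partial_{z_1}$. After bounding the polynomial pieces, you are left with integrals of the shape
\[
\int_{\DD^2}\frac{|\partial_{z_1}^{l}q_{\nu_j}(z_1,b_{\sigma(j)}(z_2))|^2}{|z_2-a|^{2\tau}}\,dA_{\alpha_1}(z_1)\,dA_{\alpha_2}(z_2),\qquad \tau<1.
\]
Here there is no $b_{\sigma(j)}'$ Jacobian available, so Lemma~\ref{L:changevar} does not apply directly, and the singular weight $|z_2-a|^{-2\tau}$ is in $L^1$ but not $L^\infty$. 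Since $\|q_{\nu_j}-1\|_{\veca}\to0$ gives no uniform pointwise control on $\partial_{z_1}^{l}q_{\nu_j}$, you cannot conclude smallness from an $L^\infty$-times-seminorm splitting.

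The paper closes this gap with an additional ingredient you did not invoke: the weighted Hardy-type inequality of Lemma~\ref{L: quotientderivative},
\[
\int_{\DD}\frac{|g(z)|^2}{|z-1|^{2\tau}}\,dA_{\alpha_2}(z)\le C\Bigl(|g(0)|^2+\int_{\DD}|g'(z)|^2\,dA_{\alpha_2}(z)\Bigr),\quad \tau<1,\ \alpha_2\le1.
\]
Applied with $g(z_2)=\partial_{z_1}^{l}q_{\nu_j}(z_1,b_{\sigma(j)}(z_2))$ (for fixed $z_1$), this trades the singular weight for a $\partial_{z_2}$, which via the chain rule restores the Jacobian $b_{\sigma(j)}'$ and makes the change of variables in Lemma~\ref{L:changevar} available again; the $|g(0)|^2$ term is handled by continuity of slice evaluation. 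Without this lemma, the cross term with the frozen $\partial_{z_2}$ is not under control, and the induction does not close. (A secondary issue: for $\alpha_1\ge2$ the integral formula for $\Dveca$ is unavailable, and one must first differentiate $N-1$ times in $z_1$ to reduce to $\alpha_1-2(N-1)<2$, as the paper does.)
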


Assuming Lemma \ref{L:symmetrization}, we now present the proof of Theorem \ref{T: infinitezerosthm}.
\begin{proof}[Proof of Theorem \ref{T: infinitezerosthm}]
Suppose $p$ is cyclic. Then there is a sequence of polynomials $(P_\nu)_{\nu=1}^\infty$ such that $\|p \cdot P_\nu -1 \|_{\vec \alpha}\to 1$. Put $q_\nu = p \cdot P_\nu$ and let $\mathfrak{S}_n$ denote the symmetric group of order $n$. Define
$$F_\mu(z_1,z_2):=\frac{1}{n!}\sum_{\sigma\in \mathfrak{S}_{n}} q_{\mu_1}(z_1, b_{\sigma(1)}(z_2))\cdots q_{\mu_n}(z_1, b_{\sigma(n)}(z_2)).$$ By Lemma \ref{L:symmetrization}, $F_\mu$ can be brought arbitrarily close to $1$ in $\Dveca$ through a suitable choice of $\mu$. 

We note that $F_\mu(z_1,z_2) = (1-z_1z_2)^n Q_\mu(z_1,z_2)$, where
$Q_\nu$ is holomorphic on a neighborhood of $\overline{\DD^2}$ and
hence a multiplier. Now $Q_{\mu}$ can in turn be approximated in
multiplier norm by polynomials, and this shows that $(1-z_1z_2)^n$ is
a cyclic function. Then also $1-z_1z_2$ is cyclic since a product of
multipliers is cyclic if and only if each factor is cyclic. But as we have seen, $1-z_1z_2$  is only  cyclic
when $\alpha_1+\alpha_2\leq 1$, and the Theorem follows.
\end{proof}
\bigskip

We are left with the proof of the lemma. This will require the following one-variable estimate.
\begin{lemma}\label{L: quotientderivative}
Let $a<1$ and $\beta\leq 1$. Then there 
is a constant $C$ depending only on $a$ and $\beta$ such that for any $g\in \mathrm{Hol}(\DD)$,
\begin{equation}
\label{te}\int_\DD \left|\frac{g(z)}{(z-1)^a}\right|^2 dA_{\beta}(z) 
\leq C\left(|g(0)|^2 + \int_\DD |g'(z)|^2 dA_{\beta}(z)\right).
\end{equation}
\end{lemma}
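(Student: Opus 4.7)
The plan is to prove the lemma by combining a pointwise estimate for functions $g\in D_\beta$ with the integrability of the weight $|z-1|^{-2a}$ on the disk when $a<1$.

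First, I would exploit the reproducing kernel structure of $D_\beta$. Cauchy--Schwarz applied to the power series $g(z)=\sum_{k\geq 0}a_k z^k$ gives
\[
|g(z)|^2 \leq \Bigl(\sum_{k}(k+1)^\beta |a_k|^2\Bigr) \Bigl(\sum_{k}(k+1)^{-\beta}|z|^{2k}\Bigr) = \|g\|_\beta^2\,\Phi(|z|^2),
\]
where $\Phi(x) := \sum_{k\geq 0} (k+1)^{-\beta} x^k$ is the reproducing kernel on the diagonal. Standard generating-function asymptotics yield $\Phi(x) \asymp (1-x)^{\beta-1}$ as $x\to 1^-$ when $\beta<1$, and $\Phi(x)\asymp\log(1/(1-x))$ when $\beta=1$. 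Writing $\omega(z):=1$ if $\beta<1$ and $\omega(z):=\log(1/(1-|z|^2))$ if $\beta=1$, this yields
\[
|g(z)|^2 (1-|z|^2)^{1-\beta} \leq C\,\omega(z)\,\|g\|_\beta^2,\qquad z\in\DD.
\]

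Next, I would multiply by $|z-1|^{-2a}$ and integrate over $\DD$ to obtain
\[
\int_{\DD} \frac{|g(z)|^2}{|z-1|^{2a}}\,dA_\beta(z) \leq C\|g\|_\beta^2 \int_{\DD} \frac{\omega(z)}{|z-1|^{2a}}\,dA(z),
\]
and verify finiteness of the right-hand integral. When $\omega\equiv 1$, this is immediate because $|z-1|^{-2a}$ has an integrable singularity at $z=1$ precisely when $a<1$. In the borderline case $\beta=1$, where $\omega$ itself carries a logarithmic boundary singularity, I would use $1-|z|\leq|z-1|$ in a neighborhood of $z=1$ to bound $\omega(z)\leq C(1+\log(1/|z-1|))$; the extra logarithm is absorbed by any negative power $|z-1|^{-\varepsilon}$, so the combined singularity remains integrable whenever $a<1$.

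Finally, the one-variable integral-norm identity $\|g\|_\beta^2 \asymp |g(0)|^2 + \int_{\DD}|g'(z)|^2\,dA_\beta(z)$ (the disk analog of \eqref{integralnorm}, valid for $\beta<2$) converts the pointwise bound into the inequality stated in the lemma. The proof has no truly hard step: the only point demanding a moment of care is the log-weighted integrability check when $\beta=1$, and that is routine.
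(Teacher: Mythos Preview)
Your strategy is the same as the paper's at the crucial first step: both apply Cauchy--Schwarz to the power series of $g$, producing the factor $\sum_k (k+1)^{-\beta}|z|^{2k}$. The paper then swaps sum and integral and bounds each term $\int_{\DD}|z|^{2n}(1-|z|^2)^{1-\beta}|1-z|^{-2a}\,dA(z)$ via Jensen's inequality and Beta-function asymptotics, obtaining a convergent series $\sum n^{a-2}$. You instead sum the kernel first, use its known asymptotics $\Phi(x)\asymp(1-x)^{\beta-1}$, and reduce everything to the integrability of $|z-1|^{-2a}$ (times a logarithm in the borderline case). This is a perfectly legitimate and somewhat more streamlined variant of the same argument.

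There is, however, a genuine slip in your treatment of the case $\beta=1$. The inequality $1-|z|\le|z-1|$ yields $\log\bigl(1/(1-|z|)\bigr)\ge\log\bigl(1/|z-1|\bigr)$, which is the wrong direction: it cannot give the upper bound $\omega(z)\le C(1+\log(1/|z-1|))$ that you claim. Indeed that pointwise bound is false, since $\omega(z)=\log\bigl(1/(1-|z|^2)\bigr)$ blows up along the entire circle $\TT$, not merely at $z=1$. The conclusion you want---finiteness of $\int_{\DD}\omega(z)\,|z-1|^{-2a}\,dA(z)$---is nonetheless correct, but you must argue differently. One clean fix: bound $\log\bigl(1/(1-|z|^2)\bigr)\le C_\varepsilon(1-|z|^2)^{-\varepsilon}$ for any $\varepsilon>0$, and then invoke the Forelli--Rudin estimate \eqref{integralasymptotics} with parameters $a_{\mathrm{FR}}=-\varepsilon$ and $b=2a-2+\varepsilon<0$ (for $\varepsilon$ small enough, since $a<1$). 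Alternatively, a single application of H\"older with exponent $q>1$ chosen so that $aq<1$ separates the logarithmic factor from the $|z-1|^{-2a}$ singularity. Either repair is routine, so your overall plan stands.
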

\begin{proof}
We assume that $g(z)=\sum_{n\geq 0} a_n z^n$ belongs 
to the Dirichlet-type space $D_\beta$; otherwise, the inequality is trivial. 
We may also assume that $a_0=0$. Applying the Cauchy-Schwarz inequality to 
the expression $|\sum_{n}a_nz^n|^2$, we have 
\begin{multline*}
\int_\DD \frac{|g(z)|^2 }{|1-z|^{2a}} (1-|z|^2)^{1-\beta} dA(z) 
\leq \left(\sum_{n=1}^{\infty} n^\beta |a_n|^2\right)\\
\cdot \int_{\DD} \left(\sum_{n= 1}^{\infty} \frac{|z|^{2n}(1-|z|^2)^{1-\beta} }{n^\beta |1-z|^{2a}}\right) dA(z).
\end{multline*}
By Fubini's theorem, it now suffices to show that the series
\begin{equation}
\label{ser}
\sum_{n=1}^{\infty} \frac{1}{n^\beta} \int_\DD \frac{|z|^{2n}(1-|z|^2)^{1-\beta} }{ |1-z|^{2a}}  dA(z)
\end{equation}
converges. We recall the known asymptotics of the Beta function, valid for $t>0$,
\[\int_0^1r^n(1-r)^{t-1} dr = B(n+1, t) \asymp n^{-t};\] 
moreover 
\[\int_0^{2\pi} \frac{1}{1+r^2 - 2r \cos x} dx 
\asymp (1-r)^{-1}.\] 
Applying Jensen's inequality, bearing in mind that $a<1$, we get
\begin{align*}
\int_\DD \frac{|z|^{2n}(1-|z|^2)^{1-\beta} }{ |1-z|^{2a}}  dA(z)
&= \int_0^1r^{2n+1} (1-r^2)^{1-\beta}\\& \,\cdot \left(\int_0^{2\pi}  \frac{dx}{(1+ r^2 - 2r \cos x)^a}\right)dr \\
&\leq C\cdot 
\int_0^1 r^{2n+1 } (1-r)^{1-\beta} (1-r)^{-a} dr \\&
=C\cdot B(n+1, 2-a-\beta)
\asymp n^{a+\beta -2},
\end{align*}
and it follows that \eqref{ser} converges.
\end{proof}

\begin{proof}[Proof of Lemma \ref{L:symmetrization}]
We proceed inductively. The base case $n=1$ follows directly from Lemma \ref{L:changevar}. Let us address the inductive step. Set $$f_{\vec\nu_{n-1}}(z_1,z_2)= q_{\nu_1}(z_1, b_1(z_2))\cdots q_{\nu_{n-1}}(z_1, b_{n-1}(z_2))$$ and $$f_\nu(z_1,z_2) = f_{\vec \nu_{n-1} }(z_1, z_2) q_\nu (z_1, b_n(z_2)).$$ Our task is to show that by choosing $\nu$ large enough, we can bring
$\Dveca(f_\nu)$ 
arbitrarily close to $\Dveca(f_{\vec \nu_{n-1}})$.

Let $N$ be such that $2N-1 >\alpha_1$. To bound the $\Dveca$-norm of $f_\nu$ (where potentially $\alpha_1>1$) we estimate the norms $\|\partial_{z_1}^l f_\nu(0,\cdot)\|_{D_{\alpha_2}}$ for $l\leq N-1$,  the 
two one-variable norms $\|\partial_{z_1}^{N-1}\partial_{z_2}f_\nu(0,\cdot)\|_{D_{\alpha_2}}$ and $\|\partial_{z_1}^N f_\nu (\cdot, 0)\|_{D_{\alpha_1-2N}}$, and the double integral
\begin{equation}\label{eq:N1}
\int_{D\times \DD} |\partial_{z_1}^N \partial_{z_2} f_\nu (z_1, z_2)|^2 dA_{\alpha_1 - 2(N-1)}(z_1) dA_{\alpha_2}(z_2).
\end{equation}
Let us present the details for the double integral \eqref{eq:N1}---the proofs for the the one-variable integrals follow along the same lines.

Computing $\partial_{z_1}^N \partial_{z_2} f_\nu$ we see that it comprises the following terms:
\begin{itemize}
\item[(i)] $\partial_{z_1}^N \partial_{z_2}f_{\vec \nu_{n-1}}(z_1, z_2) q_\nu(z_1, b_n(z_2))$,
\medskip
\item[(ii)] $\partial_{z_1}^k \partial_{z_2} f_{\vec \nu_{n-1}}(z_1, z_2)\partial_{z_1}^l  q_\nu(z_1, b_n(z_2))$, where $k+l=N$ and $l\geq 1$.
\medskip
\item[(iii)] $\partial_{z_1}^k f_{\vec \nu_{n-1}}(z_1, z_2)\partial_{z_1}^l  \partial_{z_2} q_\nu(z_1, b_n(z_2)) b_n'(z_2)$, where $k+l=N$,
\medskip
\end{itemize}
We shall show that (ii) and (iii) tend to $0$ in $L^2 (d A_{\alpha_1-2(N-1)}(z_1) dA_{\alpha_2}(z_2))$ as $\nu\to \infty$. Moreover, we shall show the integral of the modulus squared of (i) with respect to the measure $d A_{\alpha_1-2(N-1)} (z_1) dA_{\alpha_2}(z_2)$ tends to $$\int_{\DD^2} |\partial_{z_1}^N \partial_{z_2} f_{\vec \nu_{n-1}}(z_1, z_2)|^2 d A_{\alpha_1-2(N-1)}(z_1) dA_{\alpha_2}(z_2),$$ which is then controlled by  the induction hypothesis.

We first turn to (iii). Since $\partial_{z_1}^k f_{\vec \nu_{n-1}}$ is bounded on $\DD^2$ we need to estimate $$\int_{\DD^2} |\partial_{z_1}^l \partial_{z_2} q_\nu(z_1, b_n(z_2))b_n'(z_2)|^2 dA_{\alpha_1-2(N-1)} (z_1) dA_{\alpha_2}(z_2),$$ which we do by changing variables (Lemma~\ref{L:changevar}) and invoking the hypothesis that $q_{\nu}\to 1$ in $\Dveca$ as $\nu \to \infty$.

We next continue with terms (i) and (ii) simultaneously. Since $q_\nu(0,b_n(0)$ tends to $1$, we first replace $q_\nu$ with $q_\nu-q_\nu(0,b_n(0))$, and we show that after performing this replacement, the resulting terms converge to $0$ in $L^2(dA_{\alpha_1- 2(N-1)}(z_2) dA_{\alpha_2}(z_2))$.  Note that the functions $\partial_{z_1}^k\partial_{z_2} f_{\vec \nu_{n-1}}(z_1,z_2)$ need not be bounded. However, each of them can be written as a sum of products of a function that is bounded on $\DD^2$, and the functions $b_j'(z_2)\in \mathcal O(D)$, $j=1,\ldots, n-1$. Hence, it suffices to show that 
\[\int_{\DD^2} |b_j'(z_2)|^2  |\partial^l_{z_1} q_\nu(z_1, b_n(z_2))|^2 dA_{\alpha_1-2(N-1)}(z_1) dA_{\alpha_2}(z_2)\] tends to $0$ as $\nu\to \infty$ if $l\leq N$.
Note that the set of singularities $S\cap \overline{\DD}$ of the functions $b_j$, $j=1, \ldots n$, is finite. Invoking the estimate \eqref{puiseuxest} at a point $a\in S$ to bound $|b'_j|$ leads us to integrals of the form 
\[\int_{\DD^2}\frac{1}{|z_2-a|^{2\tau}} |\partial_{z_1}^l q_\nu(z_1, b_n(z_2))|^2 dA_{\alpha_1-2(N-1)}(z_1) dA_{\alpha_2}(z_2),\] 
where $\tau< 1$ and $a\in \overline{\DD}$. We distinguish between two cases: $a\in \DD$ and $a\in \TT$. 

We first deal with the case when $a$ is in the boundary of $\DD$. Applying a rotation if necessary, we can take $a=1$ and use Lemma \ref{L: quotientderivative} (keeping $z_1$ fixed) to get
\begin{multline}\label{mult}\int_\DD \frac{1}{|z_2-a|^{2\tau}}  |\partial_{z_1}^l q_\nu(z_1, b_n(z_2))|^2 dA_{\alpha_2}(z_2)  \\ \leq 
 C\left( |\partial_{z_1}^l q_\nu (z_1, b_n(0))|^2\right)+  C\int_{\DD} |\partial_{z_1}^l \partial_{z_2} q_\nu (z_1, z_2) |^2 dA_{\alpha_2}(z_2).
\end{multline}
We now integrate over $\DD$ with respect to $dA_{\alpha_1-2(N-1)}(z_1)$, and since the right-hand side tends to zero $\nu$, we obtain the desired conclusion.

Now let us consider the case when $a$ lies in the open unit disk. We can cover a neighborhood of $\DD\setminus \{a\}$ with a finite family of relatively compact disks, and estimate the integral over any such disk $\Delta$. This leaves us with two types of integrals, one over $\DD\setminus \Delta$ where $|z_2-a|>c>0$, meaning that it suffices to bound
\begin{equation}\label{eq:kon}\int_{\DD^2} |\partial_{z_1}^l q_\nu(z_1, b_n(z_2))|^2 dA_{\alpha_1-2 (N-1) }(z_1) dA_{\alpha_2}(z_2),
\end{equation}
and the contribution that arises from integrating over $\Delta$,
\begin{equation}\label{eq:kon1}\int_{\Delta} \frac{1}{|z_2-a|^{2\tau}} |\partial_{z_1}^l q_\nu(z_1, b_n(z_2)|^2 dA_{\alpha_1-2(N-1)}(z_1) dA_{\alpha_2}(z_2).
\end{equation}
A bound on \eqref{eq:kon} follows from  \eqref{mult} (with $\tau=0$).
Since $b_n(\Delta)$ is relatively compact in $\DD$ and $(1-|z_2|^2)^{1-\alpha_2}$ is bounded, after an affine change of variables in \eqref{eq:kon1}, we can bound this integral by
$$\int_{\DD^2} \frac{1}{|z_2-1|^{2\tau}} |\partial_{z_1}^l q_\nu (z_1, b(z_2))|^2 dA_{\alpha_1-2(N-1)}(z_1) dA(z_2), $$ where $b\colon \DD\to \DD$ is of bounded multiplicity and $b(\DD)\Subset \DD$. That integral, according to Lemma~\ref{L: quotientderivative}, can be estimated by
\begin{equation}\label{eq:kon2}
\int_{\DD^2}  |\partial_{z_1}^l\partial_{z_2} q_\nu (z_1, b(z_2))|^2 |b'(z)|^2 dA_{\alpha_1-2(N-1)}(z_1) dA(z_2).
\end{equation}
Since $b(\DD)$ is relatively compact in $\DD$ we can insert the weight $(1-|b(z_2)|^2)^{1-\alpha_2}$ in the above integral. Changing variables as in Lemma~\ref{L:changevar}, we bound \eqref{eq:kon2} by 
$$\int_{\DD^2}  |\partial_{z_1}^l\partial_{z_2} q_\nu (z_1, z_2)|^2 dA_{\alpha_1-2(N-1)}(z_1) dA_{\alpha_2}(z_2),$$
which we again have control over. 

This concludes the proof of Lemma \ref{L:symmetrization}.
\end{proof}

\section*{Appendix A}
Here we give a direct proof that $p(z_1,z_2)=2-z_1-z_2$ is cyclic for
$\Dveca$ whenever $\min\{\alpha_1,\alpha_2\}\le1$.

\begin{proof}
Let $f$ be a function in $\Dveca$ orthogonal to $z_1^jz_2^kp(z_1,z_2)$ for all $j,k\ge0$. We need to show that $f=0$. If we write
\[
f(z_1,z_2):=\sum_{k,l\ge0}\frac{b_{k,l}}{(k+1)^{\alpha_1}(l+1)^{\alpha_2}}z_1^kz_2^l,
\]
then the orthogonality condition becomes 
\begin{equation}\label{E:orthog}
2b_{k,l}=b_{l+1,l}+b_{k,l+1} \qquad(k,l\ge0),
\end{equation}
and the condition that $f$ belongs to $\Dveca$ translates to
\begin{equation}\label{E:Dirichlet}
\sum_{k,l\ge0}\frac{|b_{k,l}|^2}{(k+1)^{\alpha_1}(l+1)^{\alpha_2}}<\infty.
\end{equation}
Our goal is thus prove that conditions 
\eqref{E:orthog} and \eqref{E:Dirichlet} together imply that $b_{k,l}=0$ for all $k,l\ge0$.
We can suppose without loss of generality that $\alpha_1\ge1$ and $\alpha_2\ge1$.

Define a new function $g$ by
\[
g(z_1,z_2):=\sum_{k,l\ge0}b_{k,l}z_1^kz_2^l.
\]
Condition \eqref{E:Dirichlet} ensures that $g$ is holomorphic on $\DD^2$.  Also condition \eqref{E:orthog} implies the identity
\[
2g(z_1,z_2)=\frac{g(z_1,z_2)-g(0,z_2)}{z_1}+\frac{g(z_1,z_2)-g(z_1,0)}{z_2} 
\qquad(z \in\DD^2),
\]
which, after rearrangement, becomes
\begin{equation}\label{E:fundamental}
(z_1+z_2-2z_1z_2)g(z_1,z_2)= z_1g(z_1,0)+z_2g(0,z_2) 
\qquad(z \in\DD^2).
\end{equation}
Consider now the substitutions $z_1:=\zeta/(\zeta-1)$ and $z_2:=\zeta/(\zeta+1)$. Note that we have $z_1\in\DD\iff \Re\zeta<1/2$ and  $z_2\in\DD\iff \Re\zeta>-1/2$. Substituting these values of $z_1,z_2$ into \eqref{E:fundamental}, we find that
\[
0=\frac{\zeta}{\zeta-1}g\Bigl(\frac{\zeta}{\zeta-1},0\Bigr)+
\frac{\zeta}{\zeta+1}g\Bigl(0,\frac{\zeta}{\zeta+1}\Bigr)
\qquad(-1/2<\Re\zeta<1/2).
\]
Thus, if we define $h\colon \CC\to\CC$ by
\[
h(\zeta):=
\begin{cases}
\frac{1}{\zeta-1}g\Bigl(\frac{\zeta}{\zeta-1},0\Bigr),
&\Re\zeta<1/2,\\
\frac{-1}{\zeta+1}g\Bigl(0,\frac{\zeta}{\zeta+1}\Bigr),
&\Re\zeta>-1/2,
\end{cases}
\]
then $h$ is a well-defined entire function. Note also that
\begin{align*}
\sum_{k\ge0}\frac{|b_{k0}|^2}{(k+1)^{\alpha_1}}
&\asymp\int_\DD |g(z_1,0)|^2(1-|z_1|^2)^{\alpha_1-1}\,dA(z_1)\\
&=\int_{\Re\zeta<1/2} |(\zeta-1)h(\zeta)|^2\Bigl(1-\Bigl|\frac{\zeta}{\zeta-1}\Bigr|^2\Bigr)^{\alpha_1-1}\,
\frac{dA(\zeta)}{|\zeta-1|^4}\\
&=\int_{\Re\zeta<1/2}|h(\zeta)|^2\frac{(1-2\Re\zeta)^{\alpha_1-1}}{|\zeta-1|^{2\alpha_1}}\,dA(\zeta),
\intertext{and likewise}
\sum_{l\ge0}\frac{|b_{0l}|^2}{(l+1)^{\alpha_2}}
&\asymp\int_\DD |g(0,z_2)|^2(1-|z_2|^2)^{\alpha_2-1}\,dA(z_2)\\
&=\int_{\Re\zeta>-1/2} |(\zeta+1)h(\zeta)|^2\Bigl(1-\Bigl|\frac{\zeta}{\zeta+1}\Bigr|^2\Bigr)^{\alpha_2-1}\, \frac{dA(\zeta)}{|\zeta+1|^4}\\
&=\int_{\Re\zeta>-1/2}|h(\zeta)|^2\frac{(1+2\Re\zeta)^{\alpha_2-1}}{|\zeta+1|^{2\alpha_2}}\,dA(\zeta).
\end{align*}
Both these series are finite, by \eqref{E:Dirichlet}, so the sum of the two integrals is finite, and consequently
\[
\int_{|\zeta|>1} \frac{|h(\zeta)|^2}{|\zeta|^{2M}}\,dA(\zeta)<\infty,
\]
where $M:=\max\{\alpha_1,\alpha_2\}$. This forces $h$ to be a polynomial. Thus, if $h\not\equiv0$, then $|h(\zeta)|\ge c>0$ for all large $|\zeta|$, and substituting this information back into the integrals already known to be finite, we get
\[
\int_{\Re\zeta<1/2}\frac{(1-2\Re\zeta)^{\alpha_1-1}}{|\zeta-1|^{2\alpha_1}}\,dA(\zeta)<\infty\]
and
\[\int_{\Re\zeta>-1/2}\frac{(1+2\Re\zeta)^{\alpha_2-1}}{|\zeta+1|^{2\alpha_2}}\,dA(\zeta)<\infty,
\]
which implies that $\alpha_1> 1$ and $\alpha_2>1$, contrary to the hypothesis that $\min\{\alpha_1,\alpha_2\}\le1$.
We conclude that $h\equiv0$.  It follows that $g(z_1,0)\equiv0$ and $g(0,z_2)\equiv0$, whence also $g(z_1,z_2)\equiv0$ by \eqref{E:fundamental} and so, finally, $b_{k,l}=0$ for all $k,l\ge0$.
\end{proof}
\begin{remark} 
The proof shows that, if $\min\{\alpha_1,\alpha_2\}\le1$, then  the conditions
\[
2b_{k,l}=b_{l+1,l}+b_{k,l+1} \qquad(k,l\ge0)
\]
and
\[
\sum_{k\ge0}\frac{|b_{k0}|^2}{(k+1)^{\alpha_1}}<\infty
\quad\text{and}\quad \sum_{l\ge0}\frac{|b_{0l}|^2}{(l+1)^{\alpha_2}}<\infty
\]
together imply that $b_{k,l}=0$ for all $j,k\ge0$.
\end{remark}
\section*{Appendix B}

\begin{lemma}[Derivative estimate for roots]\label{rootglue}
Suppose \[P(z_1,z_2)=z_2^n + A_1(z_1) z_2^{n-1}+ \cdots +A_n(z_1)\] is holomorphic in a domain 
$G\times \CC\subset \CC^2$. If $h\in \mathrm{Hol}(G')$ for a domain $G'\Subset  G$, 
and satisfies $P(z_1, h(z_1))\equiv 0$, then 
\[|h'(z_1)|\leq O(|z_1-\omega|^{1-1/n})\] 
as $z_1\to \omega\in \partial G'$. 
\end{lemma}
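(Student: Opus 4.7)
The idea is to combine implicit differentiation with a local Puiseux factorization of $P$ near the boundary point $\omega$. First I would differentiate the identity $P(z_1,h(z_1))\equiv 0$ in $G'$, which gives
\[
h'(z_1)=-\frac{\partial_{z_1}P(z_1,h(z_1))}{\partial_{z_2}P(z_1,h(z_1))}
\]
at every point where the denominator is nonzero. Because $P$ is monic of degree $n$ in $z_2$ with coefficients holomorphic on $G$, the function $h$ is locally bounded near any $\omega\in\overline{G'}\subset G$ (if $|A_j|\le M$ near $\omega$, then $|h|\le 1+M$). Consequently the numerator $\partial_{z_1}P(z_1,h(z_1))$ is bounded on a neighborhood of $\omega$, and the task reduces to a lower bound for the denominator.

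Next I would work in a small disk $U$ centered at $\omega$. Shrinking $U$, group the $n$ roots of $P(z_1,\cdot)$ into the standard Puiseux cycles; along the cycle containing $h$, say of length $k\in\{1,\dots,n\}$, the roots admit expansions of the form
\[
h_{j_0}(z_1)=w_0+c(z_1-\omega)^{1/k}+O\bigl(|z_1-\omega|^{2/k}\bigr),\qquad
h_j(z_1)=w_0+c\zeta^{j}(z_1-\omega)^{1/k}+O\bigl(|z_1-\omega|^{2/k}\bigr),
\]
with $\zeta=e^{2\pi i/k}$, $w_0=\lim_{z_1\to\omega}h(z_1)$, $c\ne 0$, and $h=h_{j_0}$ on the chosen sheet in $G'$. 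Roots in any other cycle tend to a different limit than $w_0$ and thus stay bounded away from $h(z_1)$ near $\omega$. Since
\[
\partial_{z_2}P(z_1,h(z_1))=\prod_{j\ne j_0}\bigl(h(z_1)-h_j(z_1)\bigr),
\]
the $k-1$ factors coming from the cycle of $h$ satisfy $|h-h_j|\ge c'|1-\zeta^{j-j_0}|\,|z_1-\omega|^{1/k}$, and the $n-k$ remaining factors are bounded below by a positive constant. This yields
\[
\bigl|\partial_{z_2}P(z_1,h(z_1))\bigr|\ \ge\ c''\,|z_1-\omega|^{(k-1)/k}
\]
on $U\cap G'$ for some $c''>0$.

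Combining the upper bound on the numerator with this lower bound and using $k\le n$ (so $(k-1)/k\le 1-1/n$) produces
\[
|h'(z_1)|\ \le\ \frac{C}{|z_1-\omega|^{(k-1)/k}}\ \le\ \frac{C'}{|z_1-\omega|^{1-1/n}},
\]
which is exactly the claimed estimate \eqref{puiseuxest}. The main obstacle is the Puiseux step: one has to verify that the branch on which $h$ lives belongs to a well-defined cycle and that the leading coefficients $c\zeta^j$ of neighboring roots are distinct, so that the differences $h-h_j$ really are of precise order $|z_1-\omega|^{1/k}$ and not smaller. Everything else is bookkeeping around this classical local expansion.
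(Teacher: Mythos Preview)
Your implicit-differentiation route has a real gap at precisely the point you flag as ``the main obstacle.'' The assumed expansion $h_j(z_1)=w_0+c\zeta^{j}(z_1-\omega)^{1/k}+O(|z_1-\omega|^{2/k})$ with $c\ne0$ is not always valid: the leading Puiseux exponent need not be $1/k$. Take $P(z_1,z_2)=z_2^2-z_1^3$. Here $n=k=2$, the roots are $h_\pm(z_1)=\pm z_1^{3/2}$, and $|h_+-h_-|=2|z_1|^{3/2}$, so your lower bound $|\partial_{z_2}P(z_1,h_+)|\ge c''|z_1|^{1/2}$ fails; instead $|\partial_{z_2}P|=2|z_1|^{3/2}$. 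The desired conclusion $|h'|\le C|z_1|^{-1/2}$ still holds (in fact $h'$ is bounded here), but only because the numerator $\partial_{z_1}P=-3z_1^2$ also vanishes to high order, which your argument does not track. Your assertion that roots in other cycles ``tend to a different limit than $w_0$'' is likewise false in general (consider $P=(z_2^2-z_1)(z_2-z_1)$, where all three roots tend to $0$), so those factors need not be bounded below either. These are not verifications to be filled in; the claimed inequalities simply fail.

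The paper sidesteps all of this. After reducing to the case where the germ of $\{P=0\}$ at $(\omega,w_0)$ is irreducible, it writes the Puiseux parametrization $h(z_1)=\phi\bigl((z_1-\omega)^{1/n}\bigr)$ with $\phi$ holomorphic near $0$ and differentiates directly:
\[
|h'(z_1)|=\tfrac{1}{n}\,\bigl|\phi'\bigl((z_1-\omega)^{1/n}\bigr)\bigr|\cdot|z_1-\omega|^{1/n-1}\le C\,|z_1-\omega|^{1/n-1}.
\]
No lower bound on $\partial_{z_2}P$ is needed, and no hypothesis on the leading exponent or on the limits of other cycles. Since you already invoke the Puiseux parametrization to set up your product estimate, the shortest fix to your argument is to abandon the implicit-differentiation detour and differentiate $\phi((z_1-\omega)^{1/k})$ directly, using $k\le n$.
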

This result is probably known, but we were not able to locate it in the literature, and hence we include its proof.
\begin{proof}
We may assume $\omega =0$ and $h(0)=0$. Losing no generality we may demand that an analytic set $\{P=0\}$ is irreducible in a neighborhood of $0$ (otherwise we may decrease $n$). Applying Puiseux parametrization (see \cite{LojBook}, Corollary, page 171) we find that there is an analytic germ in a neighborhood of $0$ such that $\{P=0\} = \{(\la^n, \phi(\la))\}$ in a neighborhood of $0\in \CC^2$. This means that $h(z_1)= \phi(z_1^{1/n})$, where the branch of square is properly chosen. From this one can immediately derive the assertion.
\end{proof}

\end{document}